\providecommand{\U}[1]{\protect\rule{.1in}{.1in}}
\newtheorem{theorem}{Theorem}
\newtheorem{corollary}[theorem]{Corollary}
\newtheorem{proposition}[theorem]{Proposition}
\theoremstyle{remark}
\newtheorem{remark}[theorem]{Remark}
\title{Comments on the height reducing property II}
\author{Shigeki AKIYAMA}
\address{Institute of Mathematics, University of Tsukuba, 1-1-1 Tennodai, Tsukuba,
Ibaraki, 350-0006 Japan}
\email{akiyama@math.tsukuba.ac.jp}
\author{J\"{o}rg M. THUSWALDNER}
\address{Department of mathematics and statistics, Leoben University,
Franz-Josef-Strasse 18, A-8700, Leoben, Austria}
\email{joerg.thuswaldner@unileoben.ac.at}
\author{Toufik ZA\"IMI}
\address{Department of mathematics and informatics, Larbi Ben M'hidi University, Oum El
Bouaghi 04000,  Algeria}
\email{toufikzaimi@yahoo.com}
\subjclass[2010]{11R04, 12D10, 11R06}
\keywords{Height of polynomials, Special algebraic numbers, Number systems}
\thanks{The second author was supported by projects I1136 and W1230 funded by
the Austrian Science Fund.}
\date{\today}
\begin{document}

\begin{abstract}
A complex number $\alpha$ is said to satisfy the height reducing property if there is a finite set $F\subset \mathbb{Z}$ such that $\mathbb{Z}[\alpha]=F[\alpha]$, where $\mathbb{Z}$ is the ring of the rational integers. It is easy to see that $\alpha$ is an algebraic number when it satisfies the height reducing property. We prove the relation $\operatorname{Card}(F)\geq \max\{2,\left\vert M_{\alpha}(0)\right\vert \},$  where $M_{\alpha}$ is the minimal polynomial of $\alpha$ over the field of the rational numbers, and discuss the related optimal cases, for some classes of algebraic numbers $\alpha$. In addition, we show that there is an algorithm to determine the minimal height polynomial of a given algebraic number, provided it has no conjugate of modulus one.
\end{abstract}

\maketitle

\section{Introduction}

We continue, in this manuscript, the study of the numbers with height reducing property, in short HRP. Recall that a complex number $\alpha$ is said to satisfy HRP if there is a finite set $F\subset \mathbb{Z},$ such that each polynomial with coefficients in $\mathbb{Z}$, evaluated at $\alpha$, belongs to the set
\[
F[\alpha]:=\left\{%
{\displaystyle\sum\limits_{j=0}^{n}}
f_{j}\alpha^{j}\mid(f_{0},\ldots,f_{n})\in F^{n+1},\text{ }n\in\mathbb{N}\right\},
\]
(see {\em e.g.}~\cite{ADJ:12,ATZ:14,AZ:13}). In other
words, $\alpha$ satisfies HRP when $\mathbb{Z}[\alpha]$ may be reduced to $F[\alpha]$. In \cite{ATZ:14} it is proved that $\alpha$ satisfies HRP if and only if $\alpha$ is an algebraic number whose conjugates (including $\alpha$ itself) are either all of modulus one, or all of modulus greater than one.

Throughout this paper, when we speak about conjugates, the minimal polynomial and the degree of
an algebraic number $\alpha$, this is meant over the field of the rational
numbers $\mathbb{Q}.$ The minimal polynomial $M_{\alpha}$ of $\alpha$ is
supposed to be primitive, that is, the coefficients of $M_{\alpha}$ are
integers whose greatest common divisor is one.

In fact the equality $\mathbb{Z}[\alpha]=S[\alpha],$ where $S$ is a subset of
the complex field $\mathbb{C},$ implies trivially the relations $S\subset
\mathbb{Z}[\alpha]$ and $\mathbb{Z}[\alpha]=(-S)[\alpha].$ The following
result shows that a complex number $\alpha$ satisfies HRP if and only if 
\begin{equation}\label{eq:(1)}
\mathbb{Z}[\alpha]=S[\alpha]\text{\ with }S \subset \mathbb{C}\text{\ finite.}
\end{equation}

\begin{theorem}\label{th:1}
If \eqref{eq:(1)} holds for some pair $(\alpha,S)$, then there is a finite subset $F \subset \mathbb{Z}$,  such that $\mathbb{Z}[\alpha]=F[\alpha]$ and
\[
\operatorname{Card}(F)\leq \operatorname{Card}(S)(\operatorname{Card}(S)^{s+1}-1)/(\operatorname{Card}(S)-1),
\]
where $s$ is the greatest exponent of $\alpha$ of some fixed choice of representations of the elements of $S$ in $\mathbb{Z}[\alpha]$. Moreover, $\operatorname{Card}(S)\geq2$ and the set $S$ contains at least $\left\vert M_{\alpha}(0)\right\vert$ elements of the form $P_{j}(\alpha),$ where $j\in\{0,\ldots,\left\vert M_{\alpha}(0)\right\vert -1\},$ $P_{j}\in
\mathbb{Z}[x]$ and $P_{j}(0)\equiv j\operatorname{mod}M_{\alpha}(0).$
\end{theorem}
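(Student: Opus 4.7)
The plan is to prove the three assertions in turn.

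For the existence of the integer set $F$ with the asserted cardinality bound, observe first that $S\subset S[\alpha]=\mathbb{Z}[\alpha]$ (take $n=0$ in the definition of $S[\alpha]$), so each $\sigma\in S$ admits a representation $\sigma=P_\sigma(\alpha)$ with $P_\sigma\in\mathbb{Z}[x]$; fix such a family and let $s$ denote the maximum of the degrees $\deg P_\sigma$, writing $P_\sigma(x)=\sum_{k=0}^{s}c_{\sigma,k}x^k$. Given an arbitrary $z\in\mathbb{Z}[\alpha]$, I would pick a representation $z=\sum_{j=0}^n f_j\alpha^j$ with $f_j\in S$, substitute $f_j=P_{f_j}(\alpha)$, and collect powers: the coefficient of $\alpha^m$ becomes an integer $g_m=\sum_{j+k=m}c_{f_j,k}$, a sum of at most $s+1$ integer coefficients of the $P_\sigma$'s, hence determined by an ordered tuple of elements of $S$ of length at most $s+1$. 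Taking $F$ to consist of all integers that arise in this way makes $F$ finite, puts $z$ in $F[\alpha]$, and bounds $|F|$ by the number of admissible tuples, $\sum_{\ell=1}^{s+1}|S|^{\ell}=|S|(|S|^{s+1}-1)/(|S|-1)$.

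To show $\operatorname{Card}(S)\geq 2$, I would argue by contradiction. If $S=\{s_0\}$, then $S[\alpha]=\{s_0 q_n(\alpha):n\geq 0\}$ with $q_n(x)=1+x+\cdots+x^n$. The case $s_0=0$ is immediately absurd, so $0\in\mathbb{Z}[\alpha]$ forces $q_{n_0}(\alpha)=0$ for some $n_0$. Multiplying by $\alpha-1$ yields $\alpha^{n_0+1}=1$; since $q_{n_0}(1)=n_0+1\neq 0$ we have $\alpha\neq 1$, so $\alpha$ is a root of unity of some order $k\geq 2$. The identity $q_{n+k}(\alpha)-q_n(\alpha)=\alpha^{n+1}(1+\alpha+\cdots+\alpha^{k-1})=0$ shows that $n\mapsto q_n(\alpha)$ is periodic with period $k$, so $S[\alpha]$ has at most $k$ elements, contradicting the infinitude of $\mathbb{Z}[\alpha]$.

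For the last assertion, the decisive step is to introduce the evaluation-mod-$M_\alpha(0)$ map $\pi\colon\mathbb{Z}[\alpha]\to\mathbb{Z}/M_\alpha(0)\mathbb{Z}$ defined by $\pi(P(\alpha))=P(0)\bmod M_\alpha(0)$ for $P\in\mathbb{Z}[x]$. Well-definedness is the only delicate point: if $P(\alpha)=Q(\alpha)$ with $P,Q\in\mathbb{Z}[x]$, then $M_\alpha$ divides $P-Q$ in $\mathbb{Q}[x]$, and primitivity of $M_\alpha$ together with Gauss's lemma yields $P-Q=M_\alpha R$ with $R\in\mathbb{Z}[x]$, whence $P(0)-Q(0)=M_\alpha(0)R(0)\equiv 0\pmod{M_\alpha(0)}$. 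The map $\pi$ is a surjective ring homomorphism with $\pi(\alpha)=0$, so every $z=\sum_j f_j\alpha^j\in S[\alpha]$ satisfies $\pi(z)=\pi(f_0)\in\pi(S)$. The hypothesis $S[\alpha]=\mathbb{Z}[\alpha]$ therefore forces $\pi(S)=\mathbb{Z}/M_\alpha(0)\mathbb{Z}$, which at once provides $|M_\alpha(0)|$ distinct elements $P_j(\alpha)\in S$ with $P_j(0)\equiv j\pmod{M_\alpha(0)}$ for $j=0,\ldots,|M_\alpha(0)|-1$.

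I expect the main obstacle to lie in the combinatorial bookkeeping of the first part: the sums $g_m$ at the two extremes of the substituted expansion have lengths strictly smaller than $s+1$ and involve different index patterns at the top and bottom, so some additional care (for instance, padding a representation of $z$ by using a relation $0=\sum f^{(0)}_j\alpha^j$ in $S[\alpha]$, or choosing the $P_\sigma$ in a normalised form) is needed to match the exact stated bound rather than a coarser estimate.
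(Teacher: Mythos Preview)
Your approach is essentially the same as the paper's in all three parts: substitute the fixed $\mathbb{Z}[x]$-representations of the elements of $S$ and count possible coefficient values; rule out $|S|=1$ via the root-of-unity argument and boundedness; and use Gauss' Lemma to pass from $\mathbb{Z}[\alpha]/\alpha\mathbb{Z}[\alpha]$ to $\mathbb{Z}/M_\alpha(0)\mathbb{Z}$. Your final paragraph is perceptive: the paper simply asserts that every $A_k$ lies among the sums $a_{0,j_0}+\cdots+a_{\min(k,s),j_{\min(k,s)}}$, but for $k>n$ the actual sum is a \emph{suffix} $a_{k-n,j_{k-n}}+\cdots+a_{s,j_s}$ rather than a prefix, so the paper glosses over exactly the bookkeeping point you flag; a padding argument along the lines you suggest (using a fixed $S$-representation of $0$) is one way to close this, though the exact constant may shift.
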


For a given number $\alpha$ satisfying \eqref{eq:(1)}, we denote by $S_{\alpha}$ a fixed
choice for $S\subset\mathbb{C},$ having the minimal number of elements. We also designate by $\mathcal{F}_{N}$ the set of those algebraic numbers $\alpha$ which satisfy $\operatorname{Card}(S_{\alpha})=N$. Note that, using this notation, $\alpha\in\mathcal{F}_{N}$ for some $N$ is equivalent to $\alpha$ satisfying HRP. It follows immediately
from the second assertion in Theorem~\ref{th:1} that $S_{\alpha}$ contains a complete residue system, in short CRS, $\operatorname{mod}\alpha$ in $\mathbb{Z}[\alpha]$, thus
\begin{equation}\label{eq:(2)}
\max\{2,\left\vert M_{\alpha}(0)\right\vert \}\leq \operatorname{Card}(S_{\alpha})
\end{equation}
and the index $N$ in the notation $%
\mathcal{F}%
_{N}$ is at least $2$. A result of Lagarias and Wang~\cite{LW:97} implies that an expanding algebraic integer $\alpha,$ that is an algebraic integer whose conjugates are of modulus greater than one, satisfies \eqref{eq:(1)} with $S=\{0,\pm
1,\ldots,\pm(\left\vert M_{\alpha}(0)\right\vert -1)\}.$ It thus follows for expanding integers $\alpha$ that
\begin{equation}\label{eq:(3)}
\left\vert M_{\alpha}(0)\right\vert \leq \operatorname{Card}(S_{\alpha})\leq2\left\vert
M_{\alpha}(0)\right\vert -1,
\end{equation}
and so $\alpha\in%
\mathcal{F}%
_{N}$ for some $N$ $\in\{\left\vert M_{\alpha}(0)\right\vert ,\ldots,2\left\vert M_{\alpha}(0)\right\vert -1\}.$

It is interesting to determine the elements of the optimal set $\mathcal{F}_{2},$ and to characterize all algebraic numbers $\alpha$ satisfying  $\alpha\in\mathcal{F}_{\left\vert M_{\alpha}(0)\right\vert }$. Theorem~\ref{th:2} below collects some partial answers to these questions.

It is easy to see that $\{0,1,\ldots,\left\vert \alpha\right\vert
-1)\}[\alpha]=$\textit{\ }$\mathbb{Z},$ for $\alpha
\in\mathbb{Z\cap(-\infty},-2]$ and $\{-1,0,\ldots,\alpha -2\}[\alpha]=$\textit{\ }$\mathbb{Z}$, for $\alpha\in\mathbb{Z\cap\lbrack
}3,\mathbb{\infty})$, and so, in both cases, $\alpha\in%
\mathcal{F}%
_{\left\vert M_{\alpha}(0)\right\vert }.$ This fact is already proved by Gr\"{u}nwald~\cite{G:85}. The case where
$\alpha$ is an expanding integer and $S_{\alpha}=\{0,1,\ldots,\left\vert
M_{\alpha}(0)\right\vert -1\},$ has been considered more than thirty years
ago; such a pair $(\alpha,S_{\alpha})$ has been called a {\em canonical number
system} (of the ring $\mathbb{Z}[\alpha]).$ Many results about canonical number
systems are known (see for instance the references and the results in~\cite{P:04}).
For example, K\'{a}tai and Kov\'{a}cs~\cite{KK:81} showed that a quadratic expanding
integer $\alpha$ gives rise to a canonical number system if and only if
$M_{\alpha}(x)=x^{2}+a_{1}x+a_{2},$ $a_{2}\geq2$ and $-1\leq a_{1}\leq a_{2}.$
This was proved independently by Gilbert~\cite{G:81}.  Kov\'{a}cs~\cite{K:81} showed
that the conditions $M_{\alpha}(x)=x^{d}+a_{1}x^{d-1}+\cdot\cdot
\cdot+a_{d-1}x+a_{d},$ $a_{d}\geq2,$ $d\geq2$ and $1\leq a_{1}\leq\cdot
\cdot\cdot\leq a_{d-1}\leq a_{d}$ are sufficient to obtain a canonical number
system, too. The problem to characterize canonical number systems of a given
degree has later been embedded into the problem of determining shift radix systems, see
{\em e.g.} \cite{ABBPT:05,ABPT:06} for details.

Following the Hungarian tradition, we say that the pair $(\alpha,S_{\alpha})$
is a {\em number system} of the ring $\mathbb{Z}[\alpha],$ when 
$\operatorname{Card}(S_{\alpha})=\max \{2,|M_{\alpha}(0)|\}$ and $0\in S_{\alpha}.$ It is worth noting that number systems have been
defined in a more general context, and many related results are known. For
instance, see \cite{GK:07,vdW:08} for more recent developments. An important general result is due
to K\'{a}tai~\cite{K:99}, who showed that for any number field $K$ there is an
effectively computable constant $c(K)\geq2,$ such that if the conjugates of an
element $\alpha$  of the ring of integers $\mathbb{Z}_{K}$ of $K$, are of
modulus greater than $c(K),$ then $\alpha$ gives rise to a number system of
$\mathbb{Z}_{K}.$ The particular case where $K$ is a real (resp., imaginary)
quadratic field has been specified in~\cite{F:99} (resp., in~\cite{K:94,S:89}). In fact by
considering the companion matrix of the polynomial $M_{\alpha}$ it is easy to
see by a theorem of Germ\'{a}n and Kov\'{a}cs~\cite{GK:07} that we may choose
$c(K)=2$ for any $K,$ without affecting the conclusion; this gives a complete
answer to the above mentioned question when the conjugates of the algebraic
integer $\alpha$ are all of modulus greater than 2.

\medskip\medskip

\begin{theorem}\label{th:2}
Let $\alpha$ be an algebraic number with (primitive) minimal polynomial $M_\alpha\in\mathbb{Z}[x]$. Then the following assertions hold.
\begin{itemize}
\item[(i)] The roots of unity belong to the set $\mathcal{F}_{2},$ and if $\alpha\in\mathcal{F}_{2}$ then $\alpha$ is an algebraic number whose conjugates are all of modulus $1$ or is an expanding integer, with $\left\vert
M_{\alpha}(0)\right\vert =2.$

\item[(ii)] If an algebraic number $\alpha$ satisfies $|M_{\alpha}(1)|=1,$ then $\alpha\notin\mathcal{F}_{\left\vert M_{\alpha}(0)\right\vert}$.

\item[(iii)] Let $\alpha$ be an algebraic number whose conjugates are all of modulus $1$. If $\operatorname{Card}(S_{\alpha})=\left\vert M_{\alpha}(0)\right\vert ,$  then $S_{\alpha}\nsubseteq\mathbb{Z}.$

\item[(iv)]  If $\alpha$ is an algebraic integer whose conjugates are all of modulus greater than $2,$ then $\alpha\in\mathcal{F}_{\left\vert M_{\alpha}(0)\right\vert }$.

\item[(v)]  Let $\alpha=a/b,$ where $a\in\mathbb{N},$ $b\in\mathbb{Z},$ $a>\left\vert b\right\vert \geq1$ and $\gcd(a,b)=1.$ If $a\neq b+1$ (resp., $a=b+1),$ then $\alpha\in\mathcal{F}_{\left\vert M_{\alpha}(0)\right\vert }$ (\textit{resp.,} $\alpha\in\mathcal{F}_{2\left\vert M_{\alpha}(0)\right\vert -1}).$ Moreover, we can choose
$S_{\alpha}$ in a way that $0\in S_{\alpha}\subset\mathbb{Z}.$
\end{itemize}
\end{theorem}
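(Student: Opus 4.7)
I plan to address the five parts of Theorem~\ref{th:2} in roughly increasing order of difficulty.

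Part (iv) is immediate from the Germán--Kovács theorem mentioned in the introduction: applying their result (with $c(K)=2$) to the companion matrix of $M_\alpha$ acting on $\mathbb{Z}^d\cong\mathbb{Z}[\alpha]$ yields a number system of $\mathbb{Z}[\alpha]$ with digit set of size $|\det C_\alpha|=|M_\alpha(0)|$, so $\alpha\in\mathcal{F}_{|M_\alpha(0)|}$. For the second assertion of (i), if $\alpha\in\mathcal{F}_2$ then \eqref{eq:(2)} forces $|M_\alpha(0)|\le 2$; combined with the HRP characterisation (all conjugates on, or all off, the unit circle) this leaves only two possibilities, once one observes that a non-integer algebraic number with all conjugates strictly outside the unit circle has $|a_0|/|a_d|>1$ and $a_d\ge 2$, forcing $|a_0|\ge a_d+1\ge 3$. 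For the first assertion of (i), given a root of unity $\zeta$ with $\zeta^n=1$, I exhibit an explicit two-element subset of $\mathbb{Z}$: for $\zeta\ne 1$ the set $\{0,1\}$ works, since the identity $-1=\zeta+\zeta^2+\cdots+\zeta^{n-1}$ together with $\zeta^n=1$ lets any integer combination of powers of $\zeta$ be shifted into disjoint digit positions; for $\zeta=1$ the set $\{-1,1\}$ realises every integer as a sum of $\pm 1$'s.

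For (ii) I introduce the shift map $T(\beta):=(\beta-s(\beta))/\alpha$, where $s(\beta)\in S_\alpha$ is the unique representative of $\beta$ modulo $\alpha$; this is well defined since Theorem~\ref{th:1} forces $S_\alpha$ to be a CRS mod $\alpha$ when $|S_\alpha|=|M_\alpha(0)|$. A finite $S_\alpha$-representation $\beta=\sum_{j=0}^{N}f_j\alpha^j$ exists if and only if the $T$-orbit of $\beta$ reaches $0$ (indeed, iterating gives $T^{N+1}(\beta)=0$). The hypothesis $|M_\alpha(1)|=1$ yields $\mathbb{Z}[\alpha]/(\alpha-1)=0$, so $\alpha-1$ is a unit in $\mathbb{Z}[\alpha]$. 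Hence for any nonzero $s\in S_\alpha$ (which exists since $|S_\alpha|\ge 2$), the element $\beta_s:=-s/(\alpha-1)\in\mathbb{Z}[\alpha]$ is nonzero and satisfies $\beta_s-s=\alpha\beta_s$, so $s(\beta_s)=s$ and $T(\beta_s)=\beta_s\ne 0$. Thus $\beta_s\notin S_\alpha[\alpha]$, contradicting $\mathbb{Z}[\alpha]=S_\alpha[\alpha]$.

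Part (iii) is the most delicate. Assume for contradiction $S_\alpha\subset\mathbb{Z}$ with $|S_\alpha|=|M_\alpha(0)|=a_d$ (using $|a_0|=a_d$, valid since the conjugates lie on the unit circle). Kronecker's theorem forces $\alpha$ to be non-integer (otherwise $\alpha$ is a root of unity and $|M_\alpha(0)|=1$, violating \eqref{eq:(2)}), so $a_d\ge 2$. Setting $M:=\max_{s\in S_\alpha}|s|$, any representation $\beta=\sum_{j=0}^{L-1}f_j\alpha^j$ with $f_j\in S_\alpha$ yields $|\beta_i|\le ML$ in each archimedean place (since $|\alpha_i|=1$) and, in the basis $\{1,\alpha,\ldots,\alpha^{d-1}\}$, denominators bounded by $a_d^{L-d}$. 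The number of admissible digit strings of length $\le L$ is at most $a_d^{L+1}$, whereas the number of elements of $\mathbb{Z}[\alpha]$ (dense in the Minkowski space, precisely because $\alpha$ is non-integer) meeting both bounds is of order $(ML)^d\,a_d^{d(L-d)}$, which exceeds $a_d^{L+1}$ for $d\ge 2$ and large $L$. To conclude I must upgrade this pigeonhole imbalance to the statement that some element of $\mathbb{Z}[\alpha]$ admits no finite representation, by ruling out the possibility that ``excess'' elements instead have long cancelling representations; the right way to do this is to exhibit concrete families of elements (e.g.\ elements of fixed archimedean norm with saturating denominator $a_d^{L-d}$) whose minimum representation length is pinned down exactly by the denominator bound.

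For (v), when $a=b+1$ one has $M_\alpha(1)=b-a=-1$, so (ii) immediately gives $|S_\alpha|\ge a+1$. To realise $|S_\alpha|=2a-1$ I would exhibit a concrete digit set $S\subset\mathbb{Z}$ of that size containing $0$ (for instance a translate of $\{0,1,\ldots,2a-2\}$, or the symmetric set $\{-(a-1),\ldots,a-1\}$) and verify $S[\alpha]=\mathbb{Z}[1/b]$ by a greedy $\alpha$-adic algorithm. When $a\ne b+1$ I would construct an explicit digit set of size $a$ in $\mathbb{Z}$ whose shape depends on the sign of $b$, generalising the Grünwald constructions cited in the introduction, and verify by a greedy expansion that every element of $\mathbb{Z}[1/b]=\mathbb{Z}[\alpha]$ lies in $S[\alpha]$. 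I expect the main obstacle to be part (iii), where the pigeonhole balance must be converted into the existence of an actually unrepresentable element.
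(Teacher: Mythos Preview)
Your arguments for (i), (ii) and (iv) are correct and coincide with the paper's; in particular, your route to $s/(1-\alpha)\in\mathbb{Z}[\alpha]$ via $\mathbb{Z}[\alpha]/(\alpha-1)\cong\mathbb{Z}/M_\alpha(1)\mathbb{Z}=0$ is a tidy variant of the explicit polynomial identity the paper writes down, and the fixed point $T(\beta_s)=\beta_s\neq 0$ is exactly the obstruction the paper uses.

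For (iii) your counting approach has a genuine gap, which you flag but do not close, and I do not believe it can be closed along the lines you indicate. Both constraints you extract from a length-$L$ representation are \emph{upper} bounds (on $|\beta_i|$ and on the denominator), so the pigeonhole imbalance only says that many elements obeying these bounds cannot all be represented with length $\le L$; it does not stop them from having longer representations. Your proposed fix---elements of bounded archimedean size but saturating denominator $a_d^{L-d}$---pins down a \emph{lower} bound on the length, but small $|\beta_i|$ is perfectly compatible with arbitrarily long representations when $|\alpha_i|=1$ (there is no geometric decay to exploit), so the two bounds never collide. The paper's proof is not a counting argument at all: it shows that when $S_\alpha\subset\mathbb{Z}$ the shift $J$ is a \emph{bijection} of the set $U:=\mathbb{Z}[\alpha]\cap\frac{1}{\alpha}\mathbb{Z}[1/\alpha]$ (injectivity and surjectivity both use that $S_\alpha$, being a CRS mod $\alpha$ inside $\mathbb{Z}$, is a CRS mod the leading coefficient $c$ in $\mathbb{Z}$, together with $c\mathbb{Z}\subset\frac{1}{\alpha}\mathbb{Z}[1/\alpha]$). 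Since $\mathbb{Z}[\alpha]=S_\alpha[\alpha]$ forces every $J$-orbit to hit $0$ and $0$ lies on a finite cycle, bijectivity on $U$ forces $U$ itself to be that finite cycle, contradicting $c\mathbb{Z}\subset U$.

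For (v) in the case $a=b+1$ your lower bound is too weak: part (ii) yields only $\operatorname{Card}(S_\alpha)\ge a+1$, not $2a-1$. The missing step is this. For any $S_\alpha$ with $\mathbb{Z}[\alpha]=S_\alpha[\alpha]$ and any nonzero $d\in S_\alpha$, one has $-bd-d=-(b+1)d=-ad=-b\alpha d$, so $-bd\equiv d\pmod\alpha$; if $d$ were the unique element of $S_\alpha$ in its residue class, the first digit of any representation of $-bd$ would be forced to be $d$, the remainder would again be $-bd$, and no finite representation could exist. Hence every residue class containing a nonzero element of $S_\alpha$ needs a second representative, giving $\operatorname{Card}(S_\alpha)\ge 2(a-1)+1=2a-1$. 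Your plan for $a\neq b+1$ is in the right direction but hides all the work: the paper carries it out with $S_\alpha=\{0,\dots,a-1\}$ when $b<0$ and with a specific symmetric-difference digit set when $0<b<a-1$, in each case verifying $b\mathbb{Z}\subset S_\alpha[\alpha]$ (which suffices) via an explicit three-state addition transducer rather than a bare greedy algorithm.
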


It follows, in particular, by Theorem~\ref{th:2}~(iv), that for each algebraic integer
$\alpha,$\ there is a non-negative rational integer $p$\ such that $\alpha\pm
k\in\mathcal{F}_{\left\vert M_{\alpha\pm k}(0)\right\vert },$ $\forall$ $k\in\mathbb{N}
\cap\lbrack p,\infty).$ This result may also be deduced from \cite[Theorem 5]{P:04}, which uses the above mentioned result of Kov\'{a}cs~\cite{K:81}. Notice also, by Theorem~\ref{th:2}~(v), that there is a number system for any rational number $a/b,$
where $a\in\mathbb{N},$ $b\in\mathbb{Z},$ $a>\left\vert b\right\vert \geq1$, $\gcd(a,b)=1$ and $a\neq b+1.$ See also~\cite{AFS:08} for an investigation of number
systems in rational bases.

The height reducing problem is related to the multiplicity of representations
of an element $z\in\mathbb{Z}[\alpha]$, {\em i.e.}, the number of equivalent
representations of the same number as a polynomial in base $\alpha:$
\[
z=\sum_{i=0}^{\ell}a_{i}\alpha^{i}=\sum_{i=0}^{\ell}b_{i}\alpha^{i}
\]
with $a_{i},b_{i}\in\mathbb{Z}.$ To find all these representations, we clearly need to study representations of 0 of the 
form
\[
\sum_{i=0}^{\ell}(a_{i}-b_{i})\alpha^{i}=0.
\]
If there is $H>0$ such that $|a_{i}-b_{i}|<H$ for all $i\in\mathbb{N}$, then there is a finite automaton which recognizes representations of zero, under some assumption (for basics on automata theory we refer to \cite{E:74}).

\begin{theorem}\label{th:3}
Let $H>0$ and let $\alpha$ be an algebraic number without conjugates on the
unit circle. Then, there is a finite automaton $Z(H)$ which recognizes
words $d_{m}\dots d_{0}\in\{-H,\dots,H\}^{\ast}$ such that 
$\sum_{i=0}^{m}d_{i}\alpha^{i}=0.$
\end{theorem}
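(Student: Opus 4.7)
The plan is to reformulate the condition $\sum_{i=0}^{m}d_{i}\alpha^{i}=0$ as divisibility of the polynomial $P(x)=\sum_{i=0}^{m}d_{i}x^{i}$ by $M_{\alpha}$ in $\mathbb{Z}[x]$, and then to build the automaton around the resulting quotient. Since $M_{\alpha}$ is primitive, Gauss's lemma shows that $P(\alpha)=0$ is equivalent to the existence of $Q\in\mathbb{Z}[x]$ with $P=M_{\alpha}Q$. Writing $M_{\alpha}(x)=a_{d}x^{d}+a_{d-1}x^{d-1}+\cdots+a_{0}$ and $Q(x)=\sum_{k}q_{k}x^{k}$, equating coefficients of $x^{i}$ produces the backward recurrence
\[
q_{i-d}=\frac{1}{a_{d}}\Bigl(d_{i}-\sum_{j=0}^{d-1}a_{j}\,q_{i-j}\Bigr),
\]
together with the boundary conditions $q_{k}=0$ for $k<0$ and for $k>m-d$.

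The crucial step is a uniform bound on the $q_{k}$. Because $\alpha$ has no conjugate on the unit circle, $1/M_{\alpha}(x)$ is holomorphic on an open annulus $r_{1}<|x|<r_{2}$ with $r_{1}<1<r_{2}$ and therefore admits a Laurent expansion $1/M_{\alpha}(x)=\sum_{n\in\mathbb{Z}}c_{n}x^{n}$ whose coefficients decay exponentially as $|n|\to\infty$. Hence $C:=\sum_{n\in\mathbb{Z}}|c_{n}|$ is finite and depends only on $M_{\alpha}$. Rewriting $Q=P\cdot(1/M_{\alpha})$ via the Cauchy-product formula gives $q_{k}=\sum_{i}d_{i}\,c_{k-i}$, so $|q_{k}|\le HC$ for every $k$, independently of $m$. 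This bound is the place where the hypothesis on the conjugates of $\alpha$ enters essentially, and it is the main obstacle of the proof.

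We now define $Z(H)$ as a deterministic finite automaton reading $d_{m}d_{m-1}\cdots d_{0}$ from left to right. Its states are the $d$-tuples in $\mathbb{Z}^{d}\cap[-\lfloor HC\rfloor,\lfloor HC\rfloor]^{d}$, encoding in a shift register the last $d$ values produced by the recurrence, together with a sink state. The initial state is $(0,\ldots,0)$, which expresses the upper boundary condition $q_{k}=0$ for $k>m-d$. On input $d$ in state $(s_{1},\ldots,s_{d})$ the automaton computes
\[
s_{\mathrm{new}}=\frac{1}{a_{d}}\bigl(d-a_{d-1}s_{1}-a_{d-2}s_{2}-\cdots-a_{0}s_{d}\bigr),
\]
and moves to the sink if $s_{\mathrm{new}}\notin\mathbb{Z}$ or $|s_{\mathrm{new}}|>HC$, otherwise to $(s_{\mathrm{new}},s_{1},\ldots,s_{d-1})$. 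The unique accepting state is $(0,\ldots,0)$.

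Correctness then follows at once. A non-sink computation of length $m+1$ produces an integer sequence satisfying the recurrence and the upper boundary condition, and the acceptance criterion forces $q_{-1}=\cdots=q_{-d}=0$, so $Q(x)=\sum_{k=0}^{m-d}q_{k}x^{k}$ lies in $\mathbb{Z}[x]$ and satisfies $M_{\alpha}Q=P$, i.e.\ $P(\alpha)=0$. Conversely, when $P(\alpha)=0$ the uniform bound from the key step keeps the genuine trace inside the admissible state set, so $Z(H)$ reaches the accepting state. Finally, $Z(H)$ has only finitely many states by construction, because the bound $HC$ restricts the state set to a finite subset of $\mathbb{Z}^{d}$.
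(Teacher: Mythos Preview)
Your proof is correct and takes a genuinely different route from the paper's. The paper builds the automaton directly inside $\mathbb{Z}[\alpha]$: its states are the partial Horner sums $y_u=\sum_{j=u}^{m}d_j\alpha^{j-u}$, with transition $y\mapsto\alpha y+d$, and finiteness of the state set is obtained by bounding all archimedean conjugates of $y_u$ (a geometric-series bound for the small conjugates, a ``no return'' argument for the large ones) together with a non-archimedean bound $\mu_j(y_u)\ge 0$ at the primes dividing the denominator of $(\alpha)$. Your argument instead transfers the problem to $\mathbb{Z}[x]$ via Gauss' lemma, tracks the quotient $Q=P/M_\alpha$ by a shift register on its coefficients, and replaces the separate conjugate/valuation estimates by a single uniform bound $|q_k|\le H\sum_n|c_n|$ coming from the Laurent expansion of $1/M_\alpha$ on an annulus around $|x|=1$; the hypothesis on the conjugates enters only to guarantee that this annulus exists. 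Your construction is more elementary in that it avoids prime ideals and valuations entirely and yields an explicit deterministic automaton with state set contained in $\{-\lfloor HC\rfloor,\dots,\lfloor HC\rfloor\}^d$, while the paper's construction keeps the number-theoretic flavour of the surrounding sections and makes the role of each place of $\mathbb{Q}(\alpha)$ transparent. The two automata are closely related (the paper's state $y_u$, written in the power basis of $\mathbb{Q}(\alpha)$, is a linear transform of your shift-register vector), but the finiteness proofs are logically independent.
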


This automaton tells the growth rate of the number of equivalent representations,
and is used in the study of the boundary and the topology of fractal tilings,
when $\alpha$ is an expanding algebraic integer~({\em cf.\ e.g.} \cite{BS:05,FT:06,ST:09}). We expect that it also has
potential applications in the study of spectra of polynomials and related
topics (see for instance \cite{AK:13,Z:07,Z:10}). In connection with the height reducing
property, Theorem~\ref{th:3} enables one to determine \textit{\textquotedblleft the minimal
height polynomial}\textquotedblright\ of a given algebraic number $\alpha$,
that is a non-zero polynomial $P\in\mathbb{Z}[x],$ satisfying $P(\alpha)=0$
where the maximum $H(P)$ of the absolute values of the coefficients of $P$ is as small as possible. Increasing one by one the value $H$ until the automaton recognizes a non-empty word, this theorem gives an algorithm to determine $H(P)$. We do not have such an algorithm for $\alpha$ having a conjugate on the unit circle, {\em e.g.}, when $\alpha$ is a Salem number.

The proofs of our theorems are detailed in 
Section~3. In Section~2 we show 
auxiliary results, some of which are used to prove 
Theorem~\ref{th:2}; these results are extensions 
of the corresponding ones of~\cite{K:99}.

\section{Some propositions}

We will follow the same steps as in~\cite{K:99} to show some auxiliary results of
independent interest. Note that our discussion is not restricted to expanding algebraic
integers; it is valid for general algebraic numbers.

For a non-zero algebraic number $\alpha,$ the set \ $\{0,\ldots,\left\vert
M_{\alpha}(0)\right\vert -1)\}$ is a CRS $\operatorname{mod}\alpha$ of the
ring $\mathbb{Z[\alpha]},$ and so any CRS contains exactly $\left\vert
M_{\alpha}(0)\right\vert $ elements. In other words, we identify
$\mathbb{Z}[\alpha]$ with $\mathbb{Z}[x]/(M_{\alpha})$ and consider its
quotient ring by an ideal $(x)$, that is isomorphic to $\mathbb{Z}%
[x]/(M_{\alpha},x)\simeq\mathbb{Z}/M_{\alpha}(0)\mathbb{Z}$. A CRS may be
identified with a set of representatives of the last quotient ring. Now fix a
CRS, say $R,$ then each element $\beta\in\mathbb{Z[\alpha]},$ can be written
in a unique way $\beta=r+\alpha\beta^{\prime},$ where $r\in R$ and
$\beta^{\prime}\in\mathbb{Z[\alpha]}$. Iterating the map 
\begin{equation}\label{eq:J}
\begin{array}{rrcl}
J:&\mathbb{Z}[\alpha]&\rightarrow& \mathbb{Z}[\alpha],\\[1mm]
&\beta&\mapsto &\displaystyle\frac{\beta-r}{\alpha},
\end{array}
\end{equation}
where $r$ is the unique element of $R$ satisfying $\beta\equiv r\operatorname{mod}%
\alpha,$ we can associate to any $\beta\in\mathbb{Z[\alpha]},$ a sequence
$(J^{(n)}(\beta))_{n\geq0}$ \ of elements of $\mathbb{Z[\alpha]},$ where
$J^{(0)}(\beta):=\beta$ $.$ In particular, if $J^{(n)}(\beta)=\beta$ for some
$n\geq1,$ then $\beta$ is said to be {\em periodic}; the set of periodic numbers is
denoted by $\wp.$ Setting $r_{n}=r_{n}(\beta):=J^{(n)}(\beta)-\alpha
J^{(n+1)}(\beta),$ where $n\geq0$ and $r_{n}\in R,$ we have
\begin{equation}\label{eq:(4)}
\beta=r_{0}+\cdot\cdot\cdot+r_{n}\alpha^{n}+\alpha^{n+1}J^{(n+1)}(\beta),
\end{equation}
and
\begin{equation}\label{eq:(5)}
J^{(n+1)}(\beta)=\frac{\beta}{\alpha^{n+1}}-\frac{r_{0}}{\alpha^{n+1}}%
-\cdot\cdot\cdot-\frac{r_{n}}{\alpha^{1}}.
\end{equation}

The following result gives some necessary and sufficient conditions for $S_{\alpha}$ to contain exactly one representative of each element of $\mathbb{Z}[\alpha]/\alpha\mathbb{Z}[\alpha]$.

\begin{proposition}\label{pr:1}
Let $\alpha$ be a non-zero algebraic number and let $R$ be a CRS of $\mathbb{Z}[\alpha]/\alpha\mathbb{Z}[\alpha]$. Then the following assertions are equivalent.
\begin{itemize}
\item[(i)] $\mathbb{Z}[\alpha]=R[\alpha].$
\item[(ii)] $\wp=\{J^{(n)}(0)\mid n\geq0\}$, and $\forall\beta\in\mathbb{Z}[\alpha],\; \exists\; s=s(\beta)\in\mathbb{N}$ such that $J^{(s+1)}(\beta)=0.$
\item[(iii)] $\wp=\{J^{(n)}(0)\mid n\geq0\},$ and $\forall\beta\in\mathbb{Z[\alpha]}$ the sequence $(J^{(n)}(\beta))_{n\geq0}$ is eventually periodic.
\end{itemize}
\end{proposition}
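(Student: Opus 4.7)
The plan is to prove the cycle (i) $\Rightarrow$ (ii) $\Rightarrow$ (iii) $\Rightarrow$ (i), exploiting at each step the uniqueness that makes $R$ a CRS together with the identities \eqref{eq:(4)} and \eqref{eq:(5)}.

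For (i) $\Rightarrow$ (ii), I would first observe that whenever $\mathbb{Z}[\alpha]=R[\alpha]$, every $\beta\in\mathbb{Z}[\alpha]$ admits a \emph{unique} representation $\beta=\sum_{j=0}^{s}a_{j}\alpha^{j}$ with $a_{j}\in R$: indeed, $a_{0}$ is forced to be the unique element of $R$ congruent to $\beta$ modulo $\alpha$, and uniqueness propagates by induction on $s$. Comparing with \eqref{eq:(4)} gives $a_{j}=r_{j}(\beta)$ and $J^{(s+1)}(\beta)=0$, which is the second clause of (ii). Specialising to $\beta=0$ forces $J^{(s+1)}(0)=0$ for some $s$, so the forward orbit of $0$ is a finite cycle and $\{J^{(n)}(0)\mid n\ge 0\}\subseteq\wp$. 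Conversely, if $\beta\in\wp$ satisfies $J^{(n)}(\beta)=\beta$ with $n\ge 1$ and $J^{(s+1)}(\beta)=0$, picking $k$ with $kn\ge s+1$ and iterating yields $\beta=J^{(kn)}(\beta)=J^{(kn-s-1)}(0)$, so $\wp\subseteq\{J^{(n)}(0)\mid n\ge 0\}$ as well.

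The implication (ii) $\Rightarrow$ (iii) is essentially immediate: once $J^{(s+1)}(\beta)=0$, the tail of the orbit of $\beta$ reproduces the orbit of $0$, which is periodic by (ii) applied to $\beta=0$; hence $(J^{(n)}(\beta))_{n\ge 0}$ is eventually periodic.

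The main obstacle is (iii) $\Rightarrow$ (i), where one must upgrade ``eventually periodic'' to ``eventually $0$'', which is what \eqref{eq:(4)} needs in order to truncate the sum and exhibit $\beta$ as an element of $R[\alpha]$. The key is that $0=J^{(0)}(0)\in\wp$, so there is a minimal $p\ge 1$ with $J^{(p)}(0)=0$. For an arbitrary $\beta\in\mathbb{Z}[\alpha]$, eventual periodicity produces an $n_{0}$ with $J^{(n_{0})}(\beta)\in\wp$, and the equality $\wp=\{J^{(n)}(0)\mid n\ge 0\}$ then supplies an $m\ge 0$ with $J^{(n_{0})}(\beta)=J^{(m)}(0)$. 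Choosing $k\ge 0$ so that $m+k$ is a multiple of $p$ and applying $J^{(k)}$ to both sides gives $J^{(n_{0}+k)}(\beta)=J^{(m+k)}(0)=0$; then \eqref{eq:(4)} at step $n=n_{0}+k-1$ yields $\beta=\sum_{i=0}^{n_{0}+k-1}r_{i}(\beta)\alpha^{i}\in R[\alpha]$. The reverse inclusion $R[\alpha]\subseteq\mathbb{Z}[\alpha]$ is trivial from $R\subseteq\mathbb{Z}[\alpha]$, which closes the cycle.
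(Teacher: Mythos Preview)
Your proof is correct and follows essentially the same cycle (i)$\Rightarrow$(ii)$\Rightarrow$(iii)$\Rightarrow$(i) as the paper, with the same key ideas at each step. One small inaccuracy: in (i)$\Rightarrow$(ii) you assert that every $\beta$ has a \emph{unique} representation in $R[\alpha]$, but this is not true in general---if the orbit of $0$ has period $p\ge 1$ then $0$ itself has representations of every length that is a multiple of $p$, and appending such a block to any representation of $\beta$ gives another one. Your induction only shows that, for a representation of given length $s$, the coefficients $a_0,\dots,a_s$ are forced to equal $r_0(\beta),\dots,r_s(\beta)$ and hence $J^{(s+1)}(\beta)=0$; this is exactly what you use and is precisely what the paper proves, so simply drop the word ``unique'' and the argument stands.
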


\begin{proof}
(i)$\Rightarrow$(ii). Suppose \textit{\ }$\mathbb{Z[\alpha]=}R\mathbb{[\alpha]}$ and
let $e_{0}+\cdot\cdot\cdot+e_{s}\alpha^{s}$ be a representation in $R[\alpha]$
of an element $\beta\in\mathbb{Z[\alpha]}$, where $s=s(\beta)\in\mathbb{N}.$
If $s=0,$ then by \eqref{eq:(4)} we have $\beta=r_{0}+\alpha J^{(1)}(\beta)=$
$e_{0}+\alpha0$ and so $J^{(1)}(\beta)=$ $0.$ Similarly, when $s\geq1$ we have
$\beta=r_{0}+\alpha(r_{1}+\cdot\cdot\cdot+r_{s}\alpha^{s-1}+\alpha
^{s}J^{(s+1)}(\beta))=e_{0}+\alpha(e_{1}+\cdot\cdot\cdot+e_{s}\alpha^{s-1}),$
$r_{0}=e_{0},$ $r_{1}+\cdot\cdot\cdot+r_{s}\alpha^{s-1}+\alpha^{s}%
J^{(s+1)}(\beta)=e_{1}+\cdot\cdot\cdot+e_{s}\alpha^{s-1},$ and by induction we
obtain $J^{(s+1)}(\beta)=0.$ It follows in particular when $\beta=0$ that
there is a positive integer $p=s(0)+1$ such that $J^{(p)}(0)=0;$ thus
$0\in\wp.$ Moreover, if $p$ designates the smallest integer satisfying the
last equality, then
\[
\{J^{(n)}(0)\mid n\geq0\}=\{J^{(n)}(0)\mid0\leq n\leq p-1\}\subset\wp,
\]
and for any $\beta\in\mathbb{Z[\alpha]},$ we have $\{J^{(n)}(\beta)\mid n\geq
s(\beta)+1\}=\{J^{(n)}(0)\mid0\leq n\leq p-1\}$; so $\wp\subset\{J^{(n)}%
(0)\mid n\geq0\}.$

(ii)$\Rightarrow$(iii) is trivial, since the relation
$0=J^{(s(0)+1)}(0)\in\wp$ gives that the sequence $(J^{(n)}(0))_{n\geq0}%
$\textit{\ }is purely periodic, and so we have, by the hypothesis
$J^{(s(\beta)+1)}(\beta)=0,$ where $\beta\in\mathbb{Z[\alpha]},$ that
$(J^{(n)}(\beta))_{n\geq0}$\ is eventually periodic. 

(iii)$\Rightarrow$(i). For each $\beta\in\mathbb{Z[\alpha]}$ there are two
positive rational integers $k$ and $m$ such that $J^{(k)}(\beta)=J^{(k+m)}%
(\beta).$ Hence, $J^{(m)}(J^{(k)}(\beta))=J^{(k)}(\beta),$ $J^{(k)}(\beta)\in$
$\wp$ and so $J^{(k)}(\beta)=J^{(l)}(0)$ for some $l\in\{0,\ldots,p-1\},$ where
$p$ is a positive rational integer such that $0=J^{(p)}(0);$ thus
$J^{(k+p-l)}(\beta)=J^{(p)}(0)=0,$ and by (5) we see that $\beta\in
R\mathbb{[\alpha]}.$
\end{proof}

\begin{corollary}\label{co:1} 
With the same assumption as in Proposition~\ref{pr:1} we have the following equivalence: $(\alpha,R)$ is a number system $\Longleftrightarrow\forall\beta\in\mathbb{Z[\alpha]},$ the sequence $(J^{(n)}(\beta))_{n\geq0}$ is eventually periodic, and
$\wp=\{0\}.$
\end{corollary}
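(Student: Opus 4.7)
The plan is to obtain this corollary as an annotated restatement of Proposition~\ref{pr:1}, with the extra number-system requirement $0\in R$ repackaged into the compact dynamical condition $\wp=\{0\}$. The cardinality part of the definition of a number system ($\operatorname{Card}(R)=\max\{2,|M_\alpha(0)|\}$) is automatic in this setting, since any CRS of $\mathbb{Z}[\alpha]/\alpha\mathbb{Z}[\alpha]$ already has exactly $|M_\alpha(0)|$ elements, so the whole content of the equivalence concerns $\mathbb{Z}[\alpha]=R[\alpha]$ and $0\in R$. The equivalence of $\mathbb{Z}[\alpha]=R[\alpha]$ with eventual periodicity of every $J$-orbit is precisely (i)$\Leftrightarrow$(iii) of Proposition~\ref{pr:1}, so only the role of $0$ under $J$ needs attention.

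For the direction $\Rightarrow$, I would assume $(\alpha,R)$ is a number system, so $\mathbb{Z}[\alpha]=R[\alpha]$ and $0\in R$. Proposition~\ref{pr:1}~(i)$\Rightarrow$(iii) immediately delivers the eventual periodicity of every orbit together with the identification $\wp=\{J^{(n)}(0)\mid n\geq 0\}$. Because $0\in R$, the unique residue of $0$ modulo $\alpha$ lying in $R$ is $0$ itself, so $J(0)=(0-0)/\alpha=0$; by induction $J^{(n)}(0)=0$ for every $n\geq 0$, and therefore $\wp=\{0\}$.

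For the direction $\Leftarrow$, I would assume that every orbit is eventually periodic and $\wp=\{0\}$. From $0\in\wp$ the point $0$ is periodic, and since $\wp$ is $J$-invariant (the image of a periodic point is periodic with the same period), $J(0)\in\wp=\{0\}$. Reading the defining relation $J(0)=(0-r_0)/\alpha$ backwards with $r_0\in R$ and $r_0\equiv 0\pmod{\alpha}$ forces $r_0=0$, i.e.\ $0\in R$. Consequently $\{J^{(n)}(0)\mid n\geq 0\}=\{0\}=\wp$, condition~(iii) of Proposition~\ref{pr:1} is satisfied, and its implication (iii)$\Rightarrow$(i) gives $\mathbb{Z}[\alpha]=R[\alpha]$. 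Combined with $0\in R$, this is the definition of a number system.

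I do not anticipate any genuine obstacle; the argument is pure bookkeeping on top of Proposition~\ref{pr:1}. The single point worth verifying carefully is the observation that $\wp=\{0\}$ by itself, via $J$-invariance of $\wp$, already forces $J(0)=0$, which is exactly the ingredient needed to recover $0\in R$ in the converse direction.
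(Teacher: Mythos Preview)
Your proof is correct and follows essentially the same route as the paper's: both directions rest on Proposition~\ref{pr:1}, together with the observation that $0\in R$ is equivalent to $J(0)=0$, which in turn is equivalent (given Proposition~\ref{pr:1}) to $\wp=\{0\}$. Your write-up is simply a more explicit unpacking of the paper's two-line argument.
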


\begin{proof}
The result is an immediate consequence of Proposition~\ref{pr:1}.
Indeed, if $(\alpha,R)$ is a number system, then $0\in R,$ $0=0+\alpha0,$
$J^{(1)}(0)=0$ and so $\wp=\{0\}.$ Conversely, if $\wp=\{0\},$ then $0\in\wp,$
$J^{(1)}(0)=0$ and by the relation (5) (with $n=0),$ we have that $0\in
R.$
\end{proof}

\begin{proposition}\label{pr:2}With the same hypothesis as in Proposition~\ref{pr:1}, for each $\beta\in\mathbb{Z}[\alpha]$ there is a constant $c=c(\alpha,\beta)\in\mathbb{N}$ and a positive integer $L=L(\alpha,R)$ such that $LJ^{(n)}(\beta)$ is an
algebraic integer, $\forall\, n\geq c.$
\end{proposition}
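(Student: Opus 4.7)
The plan is to carry out a prime-by-prime analysis of the iterates of $J$ inside $K=\mathbb{Q}(\alpha)$, following the spirit of~\cite{K:99} but adapting it to the fact that $\alpha$ need not be an algebraic integer (which is exactly why the multiplier $L$ is needed at all). First I would choose $L=L(\alpha,R)$ to be any positive integer with $Lr\in\mathcal{O}_K$ for every $r\in R$; such an $L$ exists because $R$ is a finite subset of the number field $K$. Writing $v_\mathfrak{p}$ for the normalized valuation at a prime $\mathfrak{p}$ of $\mathcal{O}_K$, the conclusion $LJ^{(n)}(\beta)\in\mathcal{O}_K$ reduces to $v_\mathfrak{p}(LJ^{(n)}(\beta))\ge 0$ simultaneously at every $\mathfrak{p}$ for all sufficiently large $n$.

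The primes where $v_\mathfrak{p}(\alpha)\ge 0$ come for free: at such a prime any element of $\mathbb{Z}[\alpha]$ is a $\mathbb{Z}$-polynomial in an element of non-negative valuation, hence has $v_\mathfrak{p}\ge 0$, and therefore $v_\mathfrak{p}(LJ^{(n)}(\beta))\ge v_\mathfrak{p}(L)\ge 0$ unconditionally. Only the finitely many primes $\mathfrak{p}$ in the denominator of the fractional ideal $(\alpha)$ require work. Fix such a $\mathfrak{p}$ and set $e=-v_\mathfrak{p}(\alpha)\ge 1$. The key ingredient is the dichotomy obtained from $v_\mathfrak{p}(J(\gamma))=v_\mathfrak{p}(\gamma-r)+e$, where $r\in R$ is the residue of $\gamma$. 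If $v_\mathfrak{p}(\gamma)<-v_\mathfrak{p}(L)$, then $v_\mathfrak{p}(\gamma)<v_\mathfrak{p}(r)$ (because $L$ was chosen so that $v_\mathfrak{p}(r)\ge-v_\mathfrak{p}(L)$), so the strict ultrametric identity gives $v_\mathfrak{p}(J(\gamma))=v_\mathfrak{p}(\gamma)+e$, a genuine jump upwards by $e$. If instead $v_\mathfrak{p}(\gamma)\ge-v_\mathfrak{p}(L)$, both $v_\mathfrak{p}(\gamma)$ and $v_\mathfrak{p}(r)$ are $\ge-v_\mathfrak{p}(L)$, so $v_\mathfrak{p}(J(\gamma))\ge-v_\mathfrak{p}(L)+e\ge-v_\mathfrak{p}(L)$.

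Thus the threshold $-v_\mathfrak{p}(L)$ is absorbing under $J$, and before it is crossed the valuation climbs by at least $e$ per step. After at most $c_\mathfrak{p}(\beta):=\max\{0,\lceil(-v_\mathfrak{p}(L)-v_\mathfrak{p}(\beta))/e\rceil\}$ iterations the estimate $v_\mathfrak{p}(J^{(n)}(\beta))\ge-v_\mathfrak{p}(L)$ holds and is preserved forever. Taking $c(\alpha,\beta)$ to be the maximum of the finitely many $c_\mathfrak{p}(\beta)$ over the bad primes yields $v_\mathfrak{p}(LJ^{(n)}(\beta))\ge 0$ at every $\mathfrak{p}$ whenever $n\ge c(\alpha,\beta)$, as required. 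The hard part is really just the $\mathfrak{p}$-adic bookkeeping at the finitely many bad primes; the matching of the absorbing threshold $-v_\mathfrak{p}(L)$ with the lower bound $v_\mathfrak{p}(r)\ge-v_\mathfrak{p}(L)$ on residues — which is built into the very definition of $L$ — is what makes the two cases of the dichotomy combine into the uniform conclusion.
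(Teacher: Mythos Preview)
Your argument is correct. It differs from the paper's proof in execution, though both control the denominators of $J^{(n)}(\beta)$. The paper uses the explicit formula~\eqref{eq:(5)} to place $J^{(n)}(\beta)$ in the set $\alpha^{\ell}\mathbb{Z}[1/\alpha]\cap\mathbb{Z}[\alpha]$ for large $n$, where $\ell$ is chosen so that $r/\alpha^{\ell}\in\mathbb{Z}[1/\alpha]$ for every $r\in R$; it then takes $L$ to be the norm of the denominator of $(\alpha^{\ell})$ and appeals to \cite[Lemma~3]{AZ:13} to conclude integrality. Your route is a direct local computation: you pick $L$ so that $LR\subset\mathcal{O}_K$, and at each of the finitely many primes $\mathfrak{p}$ with $v_\mathfrak{p}(\alpha)<0$ you run the absorbing-threshold argument on the recursion $J(\gamma)=(\gamma-r)/\alpha$. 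This is more self-contained (no external lemma is needed) and makes the mechanism transparent, at the cost of a little more bookkeeping. One small remark: your $c$ depends on $L$ and hence implicitly on $R$ as well as on $(\alpha,\beta)$; this is harmless since $R$ is fixed in the hypothesis, and the paper's notation $c(\alpha,\beta)$ is only meant to emphasise the dependence on $\beta$.
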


\begin{proof}
Clearly for any element $\gamma\in\mathbb{Z}[\alpha]$, there
is a positive integer $c=c(\alpha,\gamma)$ such that $\gamma/\alpha^{c}%
\in\mathbb{Z}[1/\alpha]$. Put $\ell=\max\{c(\alpha,r)\ |r\in R\}$. Then by (5)
and (6), for every $n\geq c(\alpha,\beta)$ we have $\alpha^{-\ell}%
J^{(n)}(\beta)\in\mathbb{Z}[1/\alpha]\cap\alpha^{-\ell}\mathbb{Z}[\alpha]$,
i.e., $J^{(n)}(\beta)\in\alpha^{\ell}\mathbb{Z}[1/\alpha]\cap\mathbb{Z}%
[\alpha]$. Letting $L$ be the absolute norm of the denominator of the
fractional ideal $(\alpha^{\ell})$ in $\mathbb{Q}(\alpha)$, we obtain
$LJ^{(n)}(\beta)\in L\alpha^{\ell}\mathbb{Z}[1/\alpha]\cap\mathbb{Z}[\alpha]$,
and we can deduce the result similarly to the proof of \cite[Lemma 3]{AZ:13}.
\end{proof}

For an algebraic number $\alpha$ we designate by $E(\alpha)$ the set of the
distinct embeddings of the field $\mathbb{Q}(\alpha)$\textit{\ }%
into\textit{\ }$\mathbb{C}.$ The following assertion may be easily deduced
from \cite[Lemma 1]{K:99}.

\begin{proposition}\label{pr:3}
Let $\alpha$ be an expanding algebraic number and let $R$ be a CRS. Then, there is a constant $c=c(\alpha,R)$ with the following property: for each $\beta\in\mathbb{Z}[\alpha]$ there is $n_0\in\mathbb{N}$  such that  $\left\vert\sigma(J^{(n)}(\beta))\right\vert \leq c$ for all $n\ge n_0$ and $\sigma\in E(\alpha).$
\end{proposition}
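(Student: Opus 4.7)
The plan is to iterate the defining formula for $J$ and then apply an embedding $\sigma$. Specifically, by equation \eqref{eq:(5)} we have
\[
J^{(n+1)}(\beta)=\frac{\beta}{\alpha^{n+1}}-\sum_{k=0}^{n}\frac{r_{k}}{\alpha^{n+1-k}},
\]
with $r_{0},\ldots,r_{n}\in R$. Applying $\sigma\in E(\alpha)$ and writing $\alpha_{\sigma}:=\sigma(\alpha)$, which is a conjugate of $\alpha$ and hence satisfies $|\alpha_{\sigma}|>1$ because $\alpha$ is expanding, the triangle inequality yields
\[
|\sigma(J^{(n+1)}(\beta))|\leq \frac{|\sigma(\beta)|}{|\alpha_{\sigma}|^{n+1}}+M\sum_{k=0}^{n}\frac{1}{|\alpha_{\sigma}|^{n+1-k}},
\]
where $M:=\max\{|\sigma(r)|:r\in R,\ \sigma\in E(\alpha)\}$ is finite since $R$ is finite and $E(\alpha)$ has $\deg(\alpha)$ elements.

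The key observation is that the geometric series is controlled uniformly in $n$: summing gives the bound $M/(|\alpha_{\sigma}|-1)$. Setting
\[
\rho:=\min_{\sigma\in E(\alpha)}|\alpha_{\sigma}|>1,\qquad c:=1+\frac{M}{\rho-1},
\]
which depends only on $\alpha$ and $R$, we obtain
\[
|\sigma(J^{(n+1)}(\beta))|\leq \frac{|\sigma(\beta)|}{|\alpha_{\sigma}|^{n+1}}+\frac{M}{|\alpha_{\sigma}|-1}\leq \frac{|\sigma(\beta)|}{\rho^{n+1}}+(c-1).
\]
Since $\rho>1$, the first summand tends to $0$ as $n\to\infty$; in particular, for each individual $\sigma\in E(\alpha)$ there exists $n_{\sigma}(\beta)$ beyond which this term is at most $1$, giving $|\sigma(J^{(n)}(\beta))|\leq c$.

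Finally, because $E(\alpha)$ is a finite set, one may set $n_{0}:=\max_{\sigma\in E(\alpha)}n_{\sigma}(\beta)$ to obtain a single threshold that works for all embeddings simultaneously, which is precisely the statement. The only mild subtlety is that $M$ is truly finite even though $R$ may contain non-integer elements of $\mathbb{Z}[\alpha]$; this is automatic since each $r\in R$ has only finitely many conjugates. I expect no serious obstacle: the argument is essentially a geometric series estimate as in \cite[Lemma~1]{K:99}, with the only novelty being the remark that expansivity (rather than algebraic integrality of $\alpha$) suffices because we only need $|\alpha_{\sigma}|>1$ for all $\sigma$.
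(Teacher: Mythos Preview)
Your proof is correct and follows essentially the same approach as the paper's own proof: both apply $\sigma$ to equation~\eqref{eq:(5)}, bound the resulting geometric tail by $K_\sigma/(|\sigma(\alpha)|-1)$ (you use a global $M$ and $\rho$ in place of the per-embedding $K_\sigma$ and $|\sigma(\alpha)|$, which is an immaterial variation), and then take $c=\max_\sigma\bigl(1+K_\sigma/(|\sigma(\alpha)|-1)\bigr)$.
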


\begin{proof}
For each element $\sigma\in E(\alpha)$, set $K_{\sigma}%
:=\max\{\left\vert \sigma(r)\right\vert \mid r\in R\}.$ Then, by \eqref{eq:(5)}, we have
$\sigma(J^{(n+1)}(\beta))=\frac{\sigma(\beta)}{(\sigma(\alpha))^{n+1}}
-\frac{\sigma(r_{0})}{(\sigma(\alpha))^{n+1}}-\cdot\cdot\cdot-\frac
{\sigma(r_{n})}{(\sigma(\alpha))^{1}},$ \ $\left\vert \sigma(J^{(n+1)}
(\beta))\right\vert $ $\leq\frac{\left\vert \sigma(\beta)\right\vert
}{\left\vert \sigma(\alpha)\right\vert ^{n+1}}+\frac{K_{\sigma}}{\left\vert
\sigma(\alpha)\right\vert -1},$ and the result follows immediately by setting
(for example) $c(\alpha,R)$ the greatest value of the quantities
$1+\frac{K_{\sigma}}{\left\vert \sigma(\alpha)\right\vert -1},$ when $\sigma$
runs through $E(\alpha).$
\end{proof}

The first, second and last assertions of the corollary below, have been mentioned in \cite{K:99}, when $\alpha$ is an expanding integer.

\begin{corollary}\label{co:2}
Under the assumptions of Proposition~\ref{pr:3} the following assertions hold.
\begin{itemize}
\item[(i)] $\wp$ is a finite set.

\item[(ii)] $\forall\beta\in\mathbb{Z}[\alpha],$ the sequence $(J^{(n)}(\beta))_{n\geq0}$ is eventually periodic.

\item[(iii)] $\mathbb{Z[\alpha]=}R\mathbb{[\alpha]\ }\mathbb{\Leftrightarrow}\; \wp=\{J^{(n)}(0)\mid n\geq0\}.$

\item[(iv)] $(\alpha,R)$ is a number system $\Leftrightarrow \wp=\{0\}.$
\end{itemize}
\end{corollary}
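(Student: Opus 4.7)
The plan is to combine Propositions~\ref{pr:2} and~\ref{pr:3} to bound the orbits of $J$, and then deduce (iii) and (iv) from Proposition~\ref{pr:1} and Corollary~\ref{co:1}.

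For (i), my idea is to show that $L\wp$ sits inside a finite set of algebraic integers. If $\beta\in\wp$, then $\beta=J^{(n)}(\beta)$ for arbitrarily large $n$. Proposition~\ref{pr:3} then forces $|\sigma(\beta)|\le c$ for every $\sigma\in E(\alpha)$, and Proposition~\ref{pr:2} forces $L\beta$ to be an algebraic integer (since $LJ^{(n)}(\beta)$ is an algebraic integer for $n$ large, and $J^{(n)}(\beta)=\beta$ for infinitely many such $n$). So each element of $L\wp$ is an algebraic integer lying in $\mathbb{Z}[\alpha]$, hence of degree at most $\deg(\alpha)$, whose conjugates are bounded by $Lc$. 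By the classical bound on elementary symmetric functions of the conjugates, only finitely many such algebraic integers exist, so $\wp$ is finite.

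For (ii), I would argue the same way but for a general $\beta\in\mathbb{Z}[\alpha]$. By Proposition~\ref{pr:3} there is $n_{0}$ such that for $n\ge n_{0}$, every conjugate $\sigma(J^{(n)}(\beta))$ has modulus at most $c$, and by Proposition~\ref{pr:2} we may also assume that $LJ^{(n)}(\beta)$ is an algebraic integer for all $n\ge n_{0}$. The same finiteness argument as in (i) shows that the tail $\{LJ^{(n)}(\beta)\mid n\ge n_{0}\}$ is contained in a finite set. Since $J$ is a well-defined map on $\mathbb{Z}[\alpha]$, an orbit whose tail lies in a finite set must be eventually periodic.

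Assertions (iii) and (iv) are then essentially bookkeeping. Proposition~\ref{pr:1} shows that $\mathbb{Z}[\alpha]=R[\alpha]$ is equivalent to $\wp=\{J^{(n)}(0)\mid n\ge 0\}$ \emph{together with} the eventual periodicity of $(J^{(n)}(\beta))_{n\ge 0}$ for every $\beta\in\mathbb{Z}[\alpha]$; by (ii) the latter condition is automatic in our expanding setting, which yields (iii). Similarly, Corollary~\ref{co:1} combined with (ii) immediately gives (iv).

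I do not expect any serious obstacle: the only delicate point is the joint use of Propositions~\ref{pr:2} and~\ref{pr:3}, and in particular the observation that for a periodic $\beta$ the constant $c(\alpha,\beta)$ of Proposition~\ref{pr:2} does not interfere with the conclusion that $L\beta$ itself is an algebraic integer. Once this is in place, the rest is a direct application of the earlier propositions.
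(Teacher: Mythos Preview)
Your proposal is correct and follows essentially the same approach as the paper: the paper's proof simply states that (i) and (ii) are consequences of Propositions~\ref{pr:2} and~\ref{pr:3}, and that (iii) and (iv) then follow from Proposition~\ref{pr:1} (and implicitly Corollary~\ref{co:1}). You have just spelled out the details of how the boundedness of conjugates and the algebraic-integer property combine to force the tail of each orbit into a finite set, which is exactly what the paper leaves implicit.
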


\begin{proof}
We see that (i) and (ii) are consequences of the Propositions~\ref{pr:2} and \ref{pr:3}, and from this we deduce, by Proposition~\ref{pr:1}, the last two assertions.
\end{proof}

\section{Proofs of theorems}

\begin{proof}[Proof of Theorem~\ref{th:1}] Let $(\alpha,S)$ be a pair satisfying (1), and
fix for each element of $S$ a representation, say $%
{\displaystyle\sum\limits_{k=0}^{s_{j}}}
a_{k,j}\alpha^{k},$ where $j\in\{1,\ldots,\operatorname{Card}(S)\},$ $s_{j}\in\mathbb{N}$ and
$a_{k,j}\in\mathbb{Z}.$ Padding with zeros the last sums
may also be written \
\begin{equation}\label{eq:(6)}%
{\displaystyle\sum\limits_{k=0}^{s}}
a_{k,j}\alpha^{k}, %
\end{equation}
where $s:=\max\{s_{j}\mid1\leq j\leq \operatorname{Card}(S)\}.$ If $\beta=%
{\displaystyle\sum\limits_{j=0}^{n}}
\varepsilon_{j}\alpha^{j},$ where $n\in\mathbb{N}$ and $(\varepsilon
_{0},\ldots,\varepsilon_{n})\in S^{n+1},$ then we see, by \eqref{eq:(6)}, that
$\beta=R(\alpha)$ for some $R(x):=%
{\displaystyle\sum\limits_{k=0}^{D}}
A_{k}x^{k}\in\mathbb{Z}[x]$ and $D\geq s.$ Moreover, a short computation shows
that the values of $A_{k},$ are among the numbers $a_{0,j_{0}}+a_{1,j_{1}%
}+\cdot\cdot\cdot+a_{\min\{k,s\},j_{\min\{k,s\}}},$ where $(j_{0}%
,\ldots,j_{\min\{k,s\}})$ $\in\{1,\ldots,\operatorname{Card}(S)\}^{\min\{k,s\}+1}.$ Hence, the
number of possible values of the coefficients of $R$ is given by $ 
{\displaystyle\sum\limits_{k=1}^{s+1}}
\operatorname{Card}(S)^{k}<\infty,$ and so $\alpha$ satisfies HRP. It follows immediately from
\cite[Theorem~1]{AZ:13}, that $\alpha$ is an algebraic number. Now, we show that $\operatorname{Card}(S)\geq2.$ Clearly $S[\alpha]=\{0\}\neq\mathbb{Z}[\alpha]$ when $S=\{0\}.$ The representation $%
{\displaystyle\sum\limits_{j=0}^{n}}
s\alpha^{j}$ of $0,$ must exist when $\left\{  s\right\}  =S\neq\{0\}.$
However this implies that $\alpha$ is a root of unity not equal to $1,$ and so
$S[\alpha]$ is a bounded subset of $\mathbb{C}.$ This means that
$S[\alpha]\neq\mathbb{Z}[\alpha].$ Hence, $\operatorname{Card}(S)\geq2,$ and the first
inequality in Theorem~\ref{th:1} is true, as
\[%
{\displaystyle\sum\limits_{k=1}^{s+1}}
\operatorname{Card}(S)^{k}=\operatorname{Card}(S)(\operatorname{Card}(S)^{s+1}-1)/(\operatorname{Card}(S)-1).
\]
To end the proof of Theorem~\ref{th:1} assume without loss of generality that
$\left\vert M_{\alpha}(0)\right\vert \geq2.$ 
For each $\beta\in\mathbb{Z}[\alpha]$ the representation $\beta = a_0+a_1\alpha+\cdots+a_L\alpha^L \in S_\alpha[\alpha]$ has $a_0 \equiv \beta \operatorname{mod}\alpha$. Thus, $S_\alpha$ contains a complete system of coset representatives of $\mathbb{Z}[\alpha]/\alpha\mathbb{Z}[\alpha]$. Thus by Gauss' Lemma for each $j\in\mathbb{Z}$ there is $P(\alpha) \in S_\alpha$ such that $P(0)\equiv j \operatorname{mod}M_\alpha(0)$.
\end{proof}

\begin{proof}[Proof of Theorem~\ref{th:2}~(i)] It is clear that $\mathbb{Z}[1]=\mathbb{Z=}%
\left\{  -1,1\right\}[1].$ Let $\alpha\neq1$ be a root of unity, and let
$m\in\mathbb{N\cap}[2,\infty)$ satisfying $\alpha^{m}=1$. Then, 
using the fact that $\alpha^{jm}=1,$ where $j\in\mathbb{N},$
a simple induction shows that $\mathbb{N\subset}\left\{  0,1\right\}
[\alpha].$ Similarly, by the equation $-1=%
{\displaystyle\sum\limits_{j=1}^{m-1}}
\alpha^{j},$we obtain that every negative rational integer belongs to the set
$\left\{  0,1\right\}  [\alpha].$ After this, a simple induction on the degree
of the representations of the elements of $\mathbb{Z}[\alpha]$, leads
immediately to the equation $\mathbb{Z}[\alpha]=\left\{  0,1\right\}
[\alpha].$

Now, consider $\alpha\in%
\mathcal{F}%
_{2}$ which is not a root of unity. Then, the relation \eqref{eq:(2)} gives that
$\left\vert M_{\alpha}(0)\right\vert \leq2,$ and so by \cite[Theorem~1]{AZ:13}, we
obtain that $\alpha$ is an expanding integer, or is an algebraic number whose
conjugates are of modulus 1, with $\left\vert M_{\alpha}(0)\right\vert =2$.
Indeed, if $\alpha$ is an expanding number which is not an expanding integer,
then the leading coefficient, say $c,$ of $M_{\alpha},$ satisfies $\left\vert
c\right\vert \geq2,$ and so
\[
\left\vert \frac{M_{\alpha}(0)}{c}\right\vert \leq\frac{2}{2};
\]
this last inequality leads to a contradiction, because the absolute value of
the product of the conjugates of $\alpha$ is greater than 1. 
\end{proof}

\begin{proof}[Proof of Theorem~\ref{th:2} (ii)] Suppose that $\alpha\in%
\mathcal{F}%
_{\left\vert M_{\alpha}(0)\right\vert }.$ Then, $\alpha$ satisfies HRP. By
Theorem 1, we see that $\left\vert M_{\alpha}(0)\right\vert \geq2,$ and any
corresponding set $S_{\alpha},$ satisfies $\operatorname{Card}(S_{\alpha})=\left\vert
M_{\alpha}(0)\right\vert ;$ thus $S_{\alpha}$ is a complete residue system
$\operatorname{mod}\alpha$ in $\mathbb{Z[\alpha]}.$ Set $M_{\alpha}%
(x)=A_{0}+A_{1}x+\cdot\cdot\cdot+A_{d}x^{d},$ and assume on the contrary that
$M_{\alpha}(1)^{2}=1.$ We shall obtain a contradiction by considering the
non-zero number
\[
\beta_{0}:=\frac{M_{\alpha}(1)}{(1-\alpha)}.
\]
Indeed, a simple computation shows that
\[
\beta_{0}=%
{\displaystyle\sum\limits_{j=0}^{d-1}}
\alpha^{j}%
{\displaystyle\sum\limits_{k=j+1}^{d}}
A_{k},
\]
and so $\beta_{0}\in\mathbb{Z}[\alpha].$ Moreover, if we fix a non-zero
element $s\in S_{\alpha},$ and we set
\[
\beta:=sM_{\alpha}(1)\beta_{0},
\]
then $\beta\in\mathbb{Z}[\alpha],$ $\beta=s+\alpha\beta,$ and so
$J^{(1)}(\beta)=\beta;$ thus $J^{(n)}(\beta)=\beta$ for all $n\geq0,$ and by
Proposition 4 we obtain a contradiction, since $\beta\neq0.$
\end{proof}

\begin{remark}\label{re:1}
It follows immediately by Theorem 2 (ii), that $2\in\mathcal{F}_{3},$ since $M_{2}(1)=1-2\Rightarrow2\notin\mathcal{F}_{2},$ and $\{-1,0,1\}[2]=\mathbb{Z}\Rightarrow \operatorname{Card}(S_{2})\leq3$ (this relation may also be deduced from Theorem~\ref{th:2}~(v)), and so, by Theorem~\ref{th:2}~(i), we have  $\mathbb{Q}\cap\mathcal{F}_{2}=\{-2,-1,1\}.$ Concerning the quadratic case the results of Gilbert~\cite{G:81} and  K\'{a}tai and Kov\'{a}cs~\cite{KK:81} imply that there are at least $8$ quadratic (non-real) expanding integers in $\mathcal{F}_{2}$.
Also, a short computation shows that if $\alpha$ is an expanding real quadratic integer satisfying $|M_{\alpha}(0)|= 2,$ then $M_{\alpha}(x)=x^{2}-2,$ and so $\alpha\notin\mathcal{F}_{2},$ as $|M_{\alpha}(1)|=1.$ 
For higher degrees consider for example the $p-$Eisenstein polynomials $x^{d}+px^{k}+\cdot\cdot\cdot+p,$ where $p$ is a prime, $d\geq2$, and $k$ runs through $\{0,1,\ldots,d-1\}$. We see, by the above mentioned result of Kov\'{a}cs~\cite{K:81}, that each set $\mathcal{F}_{p}$ contains at least $d^{2}$ expanding integers with degree $d.$ On the other hand, if $d\equiv0 \operatorname{mod}2$ and $\alpha$ is a root of $M_{\alpha}(x)=x^{d}+px^{d-1}+\cdot\cdot\cdot+px+p,$ then $-\alpha$ is of degree $d$ and satisfies $-\alpha$ $\notin\mathcal{F}_{p}$ as $M_{-\alpha}(1)=1$. 
\end{remark}

\begin{remark}\label{re:2} 
It is easy to see when $\alpha\in\mathcal{F}_{2}$ and $\alpha$ is not an algebraic integer, then $0\notin S_{\alpha}$ (that is $S_{\alpha}$ is not a number system). Indeed, if $0\in S_{\alpha}$
and $b$ designates the other (non-zero) element of $S_{\alpha},$ then the
representation of $-b$ in $S_{\alpha}[\alpha]$ is of the form $b+b\alpha
^{n_{1}}+\cdot\cdot\cdot+b\alpha^{n_{s}},$ where $1\leq n_{1}<$ $\ldots<n_{s}$. Thus $\alpha$ is a root of the polynomial $2+x^{n_{1}}+\cdot\cdot\cdot+x^{n_{s}}$, contradicting the fact that $\alpha$ is not an algebraic integer.  Notice that Theorem~\ref{th:2}~(ii) can also be applied in the non-integral case. For example, if $M_{\alpha}(x)=2x^{2}-3x+2,$ then $\alpha$ is a quadratic algebraic number whose conjugates are of modulus 1 ($\alpha$
satisfies HRP by \cite[Theorem 2]{AZ:13}) and $\alpha\notin\mathcal{F}_{2},$ since $M_{\alpha}(1)=1.$
\end{remark}

\begin{proof}[Proof of Theorem~\ref{th:2}~(iii)] Let\textit{ }$\alpha$ be an algebraic
number whose conjugates are all of modulus 1, such that $\operatorname{Card}(S_{\alpha
})=\left\vert M_{\alpha}(0)\right\vert.$ Then, $\alpha$ is not an algebraic
integer, since otherwise $\alpha$ is a root of unity (by Kronecker's theorem),
and Theorem 2 (i) gives, in this case, that $\operatorname{Card}(S_{\alpha})=2>1=\left\vert
M_{\alpha}(0)\right\vert .$ Notice also that $M_{\alpha}(x)=x^{d}M_{\alpha
}(1/x),$ where $d$ is the degree of $M_{\alpha},$ and so the leading
coefficient of $M_{\alpha}$, say $c$, satisfies $M_{\alpha}(0)=c\geq2.$ Assume
on the contrary, that there is a set $S_{\alpha}\subset\mathbb{Z}$ satisfying
$\operatorname{Card}(S_{\alpha})=c.$ Then, by Theorem~\ref{th:1}, the set $S_{\alpha}$ is a CRS
$\operatorname{mod}\alpha$ in $\mathbb{Z[\alpha]},$ and is also a CRS
$\operatorname{mod}c$ in $\mathbb{Z}.$ Now, we claim that the map $J$, which was defined
on the ring $\mathbb{Z[\alpha]}$ in \eqref{eq:J}, restricted to the set
\[
U:=\mathbb{Z}[\alpha]\cap\frac{1}{\alpha}\mathbb{Z}\left[  \frac{1}{\alpha
}\right]  ,
\]
is a bijection of $U$. Indeed, if $\beta\in U$ and $r=r(\beta)$ is the unique
element of $S_{\alpha}$ such that
\[
J(\beta)=(\beta-r)/\alpha,
\]
then $(\beta-r)\in\mathbb{Z[}1/\alpha\mathbb{]},$ $(\beta-r)/\alpha
\in(1/\alpha)\mathbb{Z[}1/\alpha\mathbb{]}$,\ and $J(\beta)\in U;$ thus
$J(U)\subset U.$ Moreover, if $\sum_{j=1}^{s}a_{j}/\alpha^{j}$ and $\sum
_{j=1}^{t}b_{j}/\alpha^{j}$ designate, respectively, some representations in
$(1/\alpha)\mathbb{Z[}1/\alpha\mathbb{]}$ of two elements $\beta$ and $\gamma$
of $U$, then the equation $J(\beta)=J(\gamma)$ gives immediately that $\alpha$
is a root of a polynomial with integer coefficients, whose leading
coefficient is $(r(\gamma)-r(\beta)).$ It follows by Gauss' Lemma that $r(\gamma)\equiv r(\beta)\operatorname{mod}c$ and
so $r(\gamma)=r(\beta);$ thus $\beta=\gamma$ and $J$ is injective. To complete
the proof of the claim, fix again a representation $\sum_{j=1}^{v}c_{j}%
/\alpha^{j},$ in $(1/\alpha)\mathbb{Z[}1/\alpha\mathbb{]},$ of an element $y$
of $U.$ Then, the relation $\alpha y-c_{1}\in(1/\alpha)\mathbb{Z[}%
1/\alpha\mathbb{]},$ together with the equality $-c_{1}=r(-c_{1})-ck,$ where
$k\in\mathbb{Z},$ yield $\alpha y+r(-c_{1})\in ck+(1/\alpha)\mathbb{Z[}%
1/\alpha\mathbb{]},$ and so
\begin{equation}\label{eq:(7)}
\alpha y+r(-c_{1})\in(1/\alpha)\mathbb{Z[}1/\alpha\mathbb{]}, 
\end{equation}
as $M_{\alpha}(\alpha)=0\Rightarrow c\in(1/\alpha)\mathbb{Z[}1/\alpha
\mathbb{]}$ and
\begin{equation}\label{eq:(8)}
c\mathbb{Z}\subset(1/\alpha)\mathbb{Z[}1/\alpha\mathbb{]}. 
\end{equation}
Since $\alpha y+r(-c_{1})\in\mathbb{Z[}\alpha\mathbb{]},$ $r(-c_{1})\in
S_{\alpha}$ and $J(\alpha y+r(-c_{1}))=y,$ we see by \eqref{eq:(7)} that $J$ is a
surjective, and so $J$ is a bijection of $U$. Notice also that $U$ and
$S_{\alpha}$ have only one common element (which is the unique element in $c\mathbb{Z}\cap S_\alpha$). It follows immediately that $J^{(n)}(0)\in U,$ for all $n\in\mathbb{N}$, and so $\wp=\{J^{(n)}(0)\mid n\geq0\}\subset U$. \ Recall,
by Proposition~\ref{pr:1}, that $\wp$ is finite and for each\textit{\ }$\beta\in
U,$ there exists $s\geq1$ such that $J^{(s)}(\beta)=0$. Moreover, as $\wp \subset U$ is finite and $J$ is bijective on $U$ there is $t\in \mathbb{N}$ such that  $J^{(t)}(0)=0$. Thus, again by bijectivity of $J$ the number $\beta$ has to occur somewhere in the cycle (each arrow indicates an application of $J$)
\[
0 \xrightarrow{J} y_1 \xrightarrow{J} y_2  \xrightarrow{J} \cdots  \xrightarrow{J} y_{t-1}   \xrightarrow{J} 0
\]
and, hence, $J^{(t-s)}(0)=\beta$ which implies that $\beta \in U$. Thus $U\subset\wp$, a contradiction, because by \eqref{eq:(8)} we have that $c\mathbb{Z}\subset$ $U$ and so $U$ cannot be finite.
\end{proof}

\begin{proof}[Proof of Theorem~\ref{th:2}~(iv)] 
Since the eigenvalues of the companion
matrix of the polynomial $M_{\alpha}$ are all of modulus greater than 2, the
result follows immediately from \cite[Theorem 4]{GK:07}.
\end{proof}

$\bigskip$

\begin{proof}[Proof of Theorem~\ref{th:2}~(v)]  It is clear that $M_{\alpha}(x)=bx-a,$ and if
$R$ is a CRS $\operatorname{mod}a$ in $\mathbb{Z},$ then $R$ is also a CRS
$\operatorname{mod}\alpha$ in $\mathbb{Z}[\alpha],$ as $a=b\alpha.$ Suppose
first that\textit{ }$a\neq b+1.$ We shall show that there is a CRS
$\operatorname{mod}a$ in $\mathbb{Z},$ say $S$, such that every element
$\beta\in$ $\mathbb{Z}[\alpha]$ may be written $\beta=\varepsilon_{0}%
+\cdot\cdot\cdot+\varepsilon_{n}\alpha^{n},$ for some $n\in\mathbb{N}$ and
$(\varepsilon_{0},\ldots,\varepsilon_{n})\in S^{n+1}.$ For this purpose it is
enough to prove the inclusion
\begin{equation}\label{eq:(9)}
\mathbb{Z\subset}\text{ }S[\alpha]. 
\end{equation}
Indeed, assume that all elements of $\mathbb{Z}[x]$ with degree at most $d-1,$
where $d\geq1,$ evaluated at $\alpha$ belong to the set $S[\alpha],$ and let
$P(x)=a_{0}+a_{1}x+\cdot\cdot\cdot+a_{d}x^{d}\in\mathbb{Z}[x].$ Since the
constant term $a_{0}$ may be written
\[
a_{0}=\varepsilon+aa_{1}^{\prime}=\varepsilon+ba_{1}^{\prime}\alpha,
\]
for some $\varepsilon\in S,$ and $a_{1}^{\prime}\in\mathbb{Z},$ we see that
\[
P(\alpha)=\varepsilon+(a_{1}+ba_{1}^{\prime})\alpha+\cdot\cdot\cdot
+a_{d}\alpha^{d}=\varepsilon+\alpha Q(\alpha),
\]
where $Q(x)=(a_{1}+ba_{1}^{\prime})+\cdot\cdot\cdot+a_{d}x^{d-1}\in
\mathbb{Z}[x],$ and by the induction hypothesis we obtain the result.
Furthermore, to show the inclusion \eqref{eq:(9)}, it suffices to prove that $ka\in\alpha
S[\alpha],$ $\forall$ $k\in\mathbb{Z},$ or equivalently
\begin{equation}\label{eq:(10)}
kb\in S[\alpha],\text{ \ }\forall k\in\mathbb{Z},
\end{equation}
as any rational integer may be written $\varepsilon+ka$ for some
$\varepsilon\in S$ and $k\in\mathbb{Z}.$

Assume first that $0< -b < a$ and choose $S_\alpha=\{0,\ldots, a-1\}$. As $S_\alpha$ is a CRS modulo $a$ the mapping $J$ in \eqref{eq:J} is well-defined and as in \eqref{eq:(4)} we can be used to attach a formal sum
\[
kb= \sum_{i=0}^\infty d_i\alpha^i
\]
to each $kb$ with $k\in \mathbb{Z}$. We denote this by $kb=(\ldots,d_1,d_0)_\alpha$. To prove that $b\mathbb{Z}\subset S_{\alpha}[\alpha]$ we need to show that for each $k\in\mathbb{Z}$ there is $\ell\in\mathbb{N}$ such that $a_i=0$ for each $i\ge \ell$ (in this case we say that $kb$ has a {\em finite expansion}). This will be done by an induction involving a so-called {\em transducer} automaton (see {\it e.g.}~\cite{E:74} for the definition of these objects).

As $0=(\ldots,0,0)_\alpha$ it is clear that $0$ has a finite expansion. Now assume that $kb$ has a finite expansion for some given $k$. To conclude the induction proof we have to show that the same is true for $(k \pm 1)b$. To this matter we study the effect of ``addition and subtraction of $b$'' on the expansion of a number.

We first deal with the addition of $b$. Let $kb=(\ldots ,d_2,d_1,d_0)_\alpha$. If $d_0+b \in S_\alpha$, then $(k+1)b=(\ldots d_2,d_1,d_0+b)_\alpha$ and we are done. If, however, $d_0+b \not\in S_\alpha$, then certainly $d_0+b+a \in S_\alpha$, and, 
observing that $b\alpha - a=0$, we get that $(k+1)b=(\ldots ,d_2,d_1-b,d_0+b+a)_\alpha$. In this case, again two things can happen: either $d_1-b \in S_\alpha$, in which case we are done, or $d_1-b-a \in S_\alpha$, in which case we gain $(k+1)b=(\ldots ,d_2+b,d_1-b-a,d_0+b+a)_\alpha$ and have to go on again.  Subtraction of $b$ is treated analogously. As in Akiyama {\em et al.}~\cite{AFS:08} we use a transducer to model this process (see Figure~\ref{fig:trans1}).

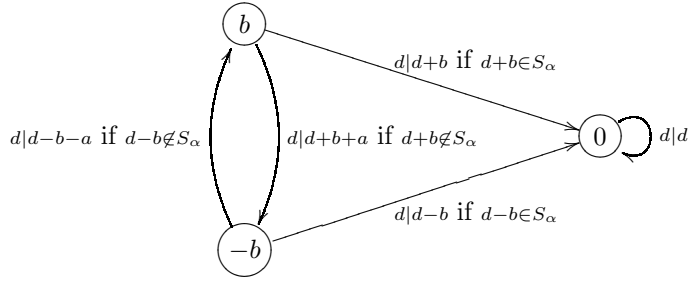
\begin{figure}[ht]
\hskip 0.5cm
\xymatrix{
*++[o][F]{b} \ar[rrrrd]^{\hskip 0.5cm d|d+b \hbox{ if }d+b\in S_\alpha} \ar@/^3ex/[dd]^{\hskip 0.0cm d|d+b+a \hbox{ if }d+b\not\in S_\alpha}&&&& \\
&&&&*++[o][F-]{0}\ar@(ur,rd)[]^{\hskip 0.01cm d|d}\\
*++[o][F-]{-b}\ar[rrrru]_{\hskip 0.5cm d|d-b \hbox{ if }d-b\in S_\alpha}\ar@/^3ex/[uu]^{\hskip 0.0cm d|d-b-a \hbox{ if }d-b\not\in S_\alpha}&&&&
}
\caption{The transducer for $0< -b < a$. If we use $b$ as a starting state, this transducer reads the digits of $kb$ from right to left and writes out the digits of $(k+1)b$. If we use $-b$ as starting state, it writes out the digits of $(k-1)b$. The label $d_1|d_2$ means that reading $d_1$ the transducer writes out $d_2$. As soon as we arrive at the state $0$, the remaining digits are just copied. \label{fig:trans1}}
\end{figure}

Let $kb=(\ldots,d_1,d_0)_\alpha$ be given. Feeding the digits of this expansion in the transducer depicted in Figure~\ref{fig:trans1} from right to left starting at the state $\pm b$ the transducer will write out the digits of the expansion of $(k\pm 1)b$. Since, by the induction assumption, $kb$ has a finite expansion, eventually we read only the digit $0$. However, as it is easily seen that two zeros in a row make sure that the transducer arrives in the ``accepting state'' 0, we conclude that the length of the expansion of $(k\pm 1)b$ can be at most by two (nonzero) digits longer than the expansion of $kb$. This proves that the expansion of $(k\pm 1)b$ is finite and the induction proof is finished.

Let now $0< b < a-1$. In this case the choice of the digit set is a bit more subtle; the multiples of $a-b$ play a special role here and need to be shifted to the negative by $a$. Indeed, set $R=\{0,\ldots,a-1\}$ and $B=\{k(a-b), k(a-b)-a\mid  1\le k \le \frac{a-1}{a-b}\}$. Then a convenient digit set is given by the symmetric difference $S_\alpha = R \triangle B$. 
The following assertions are easily checked.
\begin{itemize} 
\item[(A)] $S_\alpha$ is a CRS modulo $a$.
\item[(B)] If $d\in S_\alpha$ then either $d+b \in S_\alpha$ or $d+b-a\in S_\alpha$. 
\item[(C)] If $d\in S_\alpha$ then either $d-b \in S_\alpha$ or $d-b-a\in S_\alpha$. 
\item[(D)] $\{-b,b\} \subset S_\alpha$ (here we have to use that $b\not= a-1$).
\end{itemize}
The mapping $J$ is well defined by (A). Moreover, (B) and (C) make sure that the transducer in Figure~\ref{fig:trans2} can process all digit strings $(\ldots ,d_2,d_1,d_0) \in \{0,1,\ldots,a-1\}^\mathbb{N}$ and produces a well-defined unique output. 
\begin{figure}[ht]
\hskip 0.5cm
\xymatrix{
&*++[o][F]{b} \ar[rrrrd]^{\hskip 0.5cm d|d+b \hbox{ if }d+b\in S_\alpha} \ar@(l,d)[]_{\hskip 0.0cm d|d+b-a \hbox{ if }d+b\not\in S_\alpha}&&&& \\
&&&&&*++[o][F-]{0}\ar@(ur,rd)[]^{\hskip 0.01cm d|d}\\
&*++[o][F-]{-b}\ar[rrrru]_{\hskip 0.5cm d|d-b \hbox{ if }d-b\in S_\alpha}\ar@(l,u)[]^{\hskip 0.0cm d|d-b+a \hbox{ if }d-b\not\in S_\alpha}&&&&
}
\caption{The transducer for $0< b < a-1$. \label{fig:trans2}}
\end{figure}
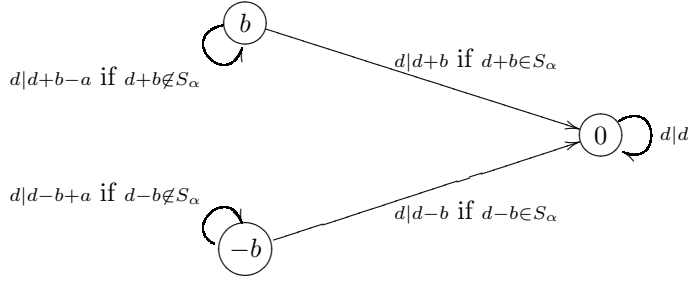
Indeed, direct calculations similar to the ones presented in the case $0<-b<a$ show that this transducer performs the addition of $\pm b$ to expansions $kb=(\ldots ,d_2,d_1,d_0)_\alpha$. Again we can now use induction to show that $kb$ has finite expansion for all $k\in \mathbb{Z}$.  Now (D) implies that two zeros in a row make sure that the transducer arrives in the ``accepting state'' 0 and we conclude again that the length of the expansion of $(k\pm 1)b$ can be at most by two (nonzero) digits longer than the expansion of $kb$. 

It remains to deal with the case $0<b = a-1$. Let $S_\alpha$ be given and suppose that $d \in S_\alpha$ is nonzero. Then $J(-bd) = \frac{-bd-d}{\alpha}=-bd$. Thus, to get a finite expansion of $-bd$ we need another element of $S_\alpha$ that lies in the same residue class modulo $a$. Therefore, for each nonzero residue class we need at least two representatives to be contained in $S_\alpha$ in order to guarantee that $b\mathbb{Z} \subset S_{\alpha}[\alpha]$ and, hence, $|S_\alpha|>2|M_\alpha(0)|-1$. As it follows immediately from the results on the case $0<-b<a$ that $S_\alpha=\{-a+1,\ldots,a-1\}$ satisfies $S_\alpha[\alpha]=\mathbb{Z}[\alpha]$, we conclude that $\alpha \in \mathcal{F}_{2|M_{\alpha}(0)|-1}$.
\end{proof}

\begin{proof}[Proof of Theorem 3] 
Let $\alpha=\alpha_{1},$ $\alpha_{2},\dots,$
$\alpha_{d}$ be the conjugates of $\alpha,$ arranged so that $|\alpha_{k}|>1$
for $k=1,\dots,n$ and $|\alpha_{k}|<1$ for $k=n+1,\dots,d.$ For each $k$ we
denote by $x_{k}$ the corresponding conjugate of any element $x$ in
$\mathbb{Z}[\alpha]$. Let $\mathfrak{p}_{1},\dots,\mathfrak{p}_{s}$ designate
the prime ideals which appear in the denominator of the prime ideal
decomposition of $(\alpha)$ in $\mathbb{Q}(\alpha)$. Set
\[
S(H):=\left\{  x\in\mathbb{Z}[\alpha]\ \left\vert \ |x_{j}|\leq\frac
{H}{|1-|\alpha_{j}||},\;(j=1,\dots ,d),\;\; \mu_{j}(x)\geq0\quad(j=1,\dots,s)\right.  \right\}  ,
\]
where $\mu_{j}(x)$ is the discrete valuation of $x$ lying over $\mathfrak{p}%
_{j}$. By definition $S(H)$ is a finite set, because it is a subset of
$\mathbb{Z}[\alpha]$ whose elements have bounded denominators and conjugates.
Clearly $S(H)\subset S(H+1)$. Let $S_{1}=\{0\}$ and we inductively define
\[
S_{j+1}=\left\{  \alpha y+d\ \left\vert \
\begin{array}
[c]{lll}%
d\in\{-H,\dots,H\},\ y\in S_{j}, &  & \\
|\alpha_{k}y_{k}+d|\leq \frac
{H}{|1-|\alpha_{k}||}\text{ for }k\leq n, &  & \\
\mu_{j}(\alpha y+d)\geq0\text{ for }1\leq j\leq s &  &
\end{array}
\right.  \right\}  
\]
Then we easily see that 
\[
S_{\infty}=\bigcup_{j=1}^{\infty}S_{j}%
\]
is a subset of $S(H)$. 
Construct an automaton $Z(H)$ having
states $S_{\infty}$, transitions $\delta(y,d)=\alpha y+d$ 
are defined if there exists a $j$ with
$y\in S_j$ and $\alpha y+d\in S_{j+1}$, and $0$
is both an initial and a final state. 
We claim
that this automaton has the required property. In fact, assume that
$\sum_{j=0}^{m}d_{j}\alpha^{j}=0$ with $d_{j}\in\{-H,\dots,H\}$. It is obvious
that
\[
\left\vert \sum_{j=u}^{m}d_{j}\alpha_{k}^{j-u}\right\vert \leq\frac
{H}{1-|\alpha_{k}|}%
\]
holds for $k\geq n+1$ and $u=0,\dots,m.$ For $k\leq n$, note that if
$|x_{k}|>H/(|\alpha_{k}|-1)$ and $x_{k}\in\mathbb{Q}(\alpha_{k}),$ then
$|\alpha_{k}x_{k}-d|>H/(|\alpha_{j}|-1)$ for $|d|\leq H$. In plain words, this
means that once $x_{k}$ becomes larger than $H/(|\alpha_{k}|-1)$, then there
is no way to come back to zero. Thus we see that
\[
\left\vert \sum_{j=u}^{m}d_{j}\alpha_{k}^{j-u}\right\vert \leq\frac
{H}{|1-|\alpha_{k}||}%
\]
is valid for all $u=0,\dots,m$ and $k=1,\dots,d$. Similarly since $\mu
_{k}(x)<0$ implies the relation $\mu_{k}(\alpha x-d)=\mu_{k}(\alpha x)<\mu
_{k}(x)<0$, we see that
\[
\mu_{k}\left(\sum_{j=u}^{m}d_{j}\alpha_{k}^{j-u}\right)\geq0
\]
for all $u$ and $k$. Therefore the sequence of states $\sum_{j=u}^{m}%
d_{j}\alpha_{k}^{j-u}$ with $u=m,m-1,\dots,0$ gives a successful path of
$Z(H)$ whose output is $d_{m}d_{m-1}\dots d_{0}$. 
\end{proof}

\begin{remark} 
By standard operations in automata, we can recognize the
set of the mirrored words $d_{0}d_{1}\dots d_{m}$. 
The automaton constructed
in the proof of Theorem~\ref{th:3} is not \textit{trim}, i.e., 
$0$ may not be reachable from some states, but it is
easy to make it to a trim automaton.
\end{remark}

\bigskip

\textbf{Acknowledgment.} We thank the referee for careful reading of this paper.

\medskip


\begin{thebibliography}{10}

\bibitem{ABBPT:05}
S.~Akiyama, T.~Borb{\'e}ly, H.~Brunotte, A.~Peth{\H{o}}, and J.~M. Thuswaldner.
\newblock Generalized radix representations and dynamical systems. {I}.
\newblock {\em Acta Math. Hungar.}, 108(3):207--238, 2005.

\bibitem{ABPT:06}
S.~Akiyama, H.~Brunotte, A.~Peth{\H{o}}, and J.~M. Thuswaldner.
\newblock Generalized radix representations and dynamical systems. {II}.
\newblock {\em Acta Arith.}, 121(1):21--61, 2006.

\bibitem{ADJ:12}
S.~Akiyama, P.~Drungilas, and J.~Jankauskas.
\newblock Height reducing problem on algebraic integers.
\newblock {\em Funct. Approx. Comment. Math.}, 47(part 1):105--119, 2012.

\bibitem{AFS:08}
S.~Akiyama, C.~Frougny, and J.~Sakarovitch.
\newblock Powers of rationals modulo 1 and rational base number systems.
\newblock {\em Israel J. Math.}, 168:53--91, 2008.

\bibitem{AK:13}
S.~Akiyama and V.~Komornik.
\newblock Discrete spectra and {P}isot numbers.
\newblock {\em J. Number Theory}, 133(2):375--390, 2013.

\bibitem{ATZ:14}
S.~Akiyama, J.~M. Thuswaldner, and T.~Za\"imi.
\newblock Characterization of the numbers which satisfy the height reducing
  property.
\newblock {\em Indag. Math.}, to appear, 2014.

\bibitem{AZ:13}
S.~Akiyama and T.~Za\"imi.
\newblock Comments on the height reducing property.
\newblock {\em Cent. Eur. J. Math.}, 11(9):1616--1627, 2013.

\bibitem{BS:05}
V.~Berth{\'e} and A.~Siegel.
\newblock Tilings associated with beta-numeration and substitutions.
\newblock {\em Integers}, 5(3):A2, 46, 2005.

\bibitem{E:74}
S.~Eilenberg.
\newblock {\em Automata, languages, and machines. {V}ol. {A}}.
\newblock Academic Press [A subsidiary of Harcourt Brace Jovanovich,
  Publishers], New York, 1974.
\newblock Pure and Applied Mathematics, Vol. 58.

\bibitem{F:99}
G.~Farkas.
\newblock Number systems in real quadratic fields.
\newblock {\em Ann. Univ. Sci. Budapest. Sect. Comput.}, 18:47--59, 1999.

\bibitem{FT:06}
C.~Fuchs and R.~Tijdeman.
\newblock Substitutions, abstract number systems and the space filling
  property.
\newblock {\em Ann. Inst. Fourier (Grenoble)}, 56(7):2345--2389, 2006.
\newblock Num{\'e}ration, pavages, substitutions.

\bibitem{GK:07}
L.~Germ{\'a}n and A.~Kov{\'a}cs.
\newblock On number system constructions.
\newblock {\em Acta Math. Hungar.}, 115(1-2):155--167, 2007.

\bibitem{G:81}
W.~J. Gilbert.
\newblock Radix representations of quadratic fields.
\newblock {\em J. Math. Anal. Appl.}, 83(1):264--274, 1981.

\bibitem{G:85}
V.~Gr\"unwald.
\newblock Intorno all'aritmetica dei sistemi numerici a base negativa con
  particolare riguardo al sistema numerico a base negativo-decimale per lo
  studio delle sue analogie coll'aritmetica ordinaria (decimale).
\newblock {\em Giornale di matematiche di Battaglini}, 23:203--221, 367, 1885.

\bibitem{K:94}
I.~K{\'a}tai.
\newblock Number systems in imaginary quadratic fields.
\newblock {\em Ann. Univ. Sci. Budapest. Sect. Comput.}, 14:91--103, 1994.
\newblock Festschrift for the 50th birthday of Karl-Heinz Indlekofer.

\bibitem{K:99}
I.~K{\'a}tai.
\newblock Construction of number systems in algebraic number fields.
\newblock {\em Ann. Univ. Sci. Budapest. Sect. Comput.}, 18:103--107, 1999.

\bibitem{KK:81}
I.~K{\'a}tai and B.~Kov{\'a}cs.
\newblock Canonical number systems in imaginary quadratic fields.
\newblock {\em Acta Math. Acad. Sci. Hungar.}, 37(1-3):159--164, 1981.

\bibitem{K:81}
B.~Kov{\'a}cs.
\newblock Canonical number systems in algebraic number fields.
\newblock {\em Acta Math. Acad. Sci. Hungar.}, 37(4):405--407, 1981.

\bibitem{LW:97}
J.~C. Lagarias and Y.~Wang.
\newblock Integral self-affine tiles in {${\bf R}^n$}. {II}. {L}attice tilings.
\newblock {\em J. Fourier Anal. Appl.}, 3(1):83--102, 1997.

\bibitem{P:04}
A.~Peth{\H{o}}.
\newblock Connections between power integral bases and radix representations in
  algebraic number fields.
\newblock In {\em Proceedings of the 2003 {N}agoya {C}onference
  ``{Y}okoi-{C}howla {C}onjecture and {R}elated {P}roblems''}, pages 115--125,
  Saga, 2004. Saga Univ.

\bibitem{ST:09}
A.~Siegel and J.~M. Thuswaldner.
\newblock Topological properties of {R}auzy fractals.
\newblock {\em M\'em. Soc. Math. Fr. (N.S.)}, (118):140, 2009.

\bibitem{S:89}
G.~Steidl.
\newblock On symmetric radix representation of {G}aussian integers.
\newblock {\em BIT}, 29(3):563--571, 1989.

\bibitem{vdW:08}
C.~van~de Woestijne.
\newblock Noncanonical number systems in the integers.
\newblock {\em J. Number Theory}, 128(11):2914--2938, 2008.

\bibitem{Z:07}
T.~Za\"imi.
\newblock Approximation by polynomials with bounded coefficients.
\newblock {\em J. Number Theory}, 127(1):103--117, 2007.

\bibitem{Z:10}
T.~Za\"imi.
\newblock Commentaires sur quelques r\'esultats sur les nombres de {P}isot.
\newblock {\em J. Th\'eor. Nombres Bordeaux}, 22(2):513--524, 2010.

\end{thebibliography}
\end{document}